\DeclarePairedDelimiterX{\divvert}[2]{(}{)}{%
  {#1}\;\delimsize\|\;{#2}%
}
\DeclarePairedDelimiterX{\divc}[2]{(}{)}{%
  {#1}\delimsize;{#2}%
}
\newcommand{\KL}{D_{\mathrm{KL}}\divvert}
\DeclarePairedDelimiter{\paren}{(}{)}
\DeclarePairedDelimiter{\sqpar}{[}{]}
\newcommand{\ent}{\mathrm{H}\paren}
\newcommand{\info}{\mathrm{I}\paren}
\newcommand{\Ruz}{\mathrm{d}\sqpar}
\newcommand{\pot}{\phi\sqpar}
\title{Improved Exponent for Marton's Conjecture in $\mathbb{F}_2^n$}
\date{April 15, 2024}
\author{Jyun-Jie Liao\thanks{Supported by Eshan Chattopadhyay's NSF CAREER award 2045576.}\\
Cornell University\\
\texttt{jjliao@cs.cornell.edu}}
\begin{document}

\maketitle
\begin{abstract}
A conjecture of Marton, widely known as the polynomial Freiman-Ruzsa conjecture, was recently proved by Gowers, Green, Manners and Tao for any bounded-torsion Abelian group $G$. In this paper we show a few simple modifications that improve their bound in $G=\bbF_2^n$. Specifically, for $G=\bbF_2^n$, they proved that any set $A\subseteq G$ with $\abs{A+A}\le K\abs{A}$ can be covered by at most $2K^C$ cosets of a subgroup $H$ of $G$ of cardinality at most $\abs{A}$, with $C=12$. In this paper we prove the same statement for $C=9$.
\end{abstract}

\section{Introduction}

A conjecture of Katalin Marton (see \cite{PFR}), widely known as the polynomial Freiman–Ruzsa conjecture, was recently proved by Gowers, Green, Manners and Tao~\cite{GGMT23,GGMT24} for any bounded-torsion Abelian group $G$. (See \cite{GGMT23} for discussions about its importance and prior progress on this conjecture.) Specifically, for the case of $G=\bbF_2^n$, Marton's conjecture is as follows.
\begin{conjecture}\label{conj:Marton}
Let $A\subseteq \bbF_2^n$ be a set with $\abs{A+A}\le K\abs{A}$. Then there exists a subspace $V\subseteq \bbF_2^n$ of size at most $\abs{A}$ such that $A$ can be covered by at most $2K^C$ translates of $V$, for some constant $C$.
\end{conjecture}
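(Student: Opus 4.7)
The plan is to follow the three-stage architecture of the Gowers--Green--Manners--Tao proof: first, transfer the combinatorial hypothesis $|A+A|\le K|A|$ into an entropic hypothesis on $\Ruz{U_A;U_A}$, where $U_A$ is uniform on $A$; second, invoke the entropic PFR of GGMT to produce a subgroup $H\subseteq\bbF_2^n$ whose uniform distribution is close in Ruzsa distance to $U_A$; third, convert this closeness back into a combinatorial covering of $A$ via Ruzsa's covering lemma. The improvement from $C=12$ to $C=9$ will come from tightening constants at each step, exploiting features specific to $\bbF_2^n$ (in particular $x=-x$, so that $A+A=A-A$ and several factors of $2$ in the general bounded-torsion argument become unnecessary).

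\emph{Step 1 (entropy reduction).} With $X=U_A$ and $X'$ an independent copy, $X+X'$ is supported on $A+A$, so $\ent{X+X'}\le\log|A+A|\le\log K+\ent{X}$, and hence $\Ruz{X;X}\le\log K$. This step is lossless.

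\emph{Step 2 (entropic PFR and covering).} The GGMT entropic PFR, applied to $X=U_A$, yields a subgroup $H$ and constants $\alpha,\beta$ with $\Ruz{X;U_H}\le\alpha\log K$ and $\max(|A|/|H|,|H|/|A|)\le K^\beta$. Unfolding the definition of $\Ruz{\cdot;\cdot}$ and using that $X+U_H$ is supported on $A+H$, we obtain $|A+H|\le K^\alpha\sqrt{|A||H|}$. After passing (if necessary) to a subspace $H'\subseteq H$ with $|A|/2<|H'|\le|A|$, Ruzsa's covering lemma bounds the number of translates of $H'$ needed to cover $A$ by $|A+H|/|H'|\le 2K^{\alpha+\beta/2}$. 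The target $C=9$ then reduces to arranging $\alpha+\beta/2\le 9$.

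\emph{Main obstacle.} The bulk of the work lies in the entropic PFR itself, whose constants cascade into the final exponent. The GGMT proof builds $H$ by decreasing a ``$\tau$-functional'' along a minimising configuration, and each invocation of the entropic Ruzsa triangle inequality, sub-modularity of entropy, or Pl\"unnecke-type estimate contributes to $\alpha$ and $\beta$; their product gave $C=12$. The plan is to audit this chain in the $\bbF_2^n$ setting, replacing general-group inequalities with their sharpened characteristic-$2$ versions (using $\Ruz{X;X}=\Ruz{X;-X}$ to halve losses from symmetrisation, and treating $+$ as an involution throughout), and to balance the covering step so that an improvement in $\alpha$ is not eaten up by a worsened $\beta$. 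The principal risk is that certain links in the GGMT chain are tight even in characteristic $2$, which would force reorganising the inductive argument rather than merely re-estimating it.
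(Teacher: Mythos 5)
Your three-stage outline (entropy reduction, entropic result, Ruzsa covering) matches the high-level architecture of both GGMT and this paper, and Step 1 is correct. However, as written the plan cannot reach $C=9$, and it omits both of the concrete ideas the paper actually uses.

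First, Step 2 contains a real error: from $\Ruz{U_A;U_H}\le\alpha\log K$ you cannot conclude $|A+H|\le K^\alpha\sqrt{|A||H|}$. The Ruzsa distance bound controls $\ent{U_A+U_H}$, and entropy can be far below $\log$ of the support size, so the support $A+H$ is not bounded this way. The standard consequence (used in \cite{GMT23,GGMT23}) is instead that some coset $H+t$ has $|A\cap(H+t)|\ge 2^{-2r}\max(|A|,|H|)$, and the paper's Lemma~\ref{lemma:slice-to-cover} then converts that into a covering by $2KR$ translates with $R=2^{2r}$. But if you go through Conjecture~\ref{conj:Marton-entropic} as a black box, taking $\bA=\bB=\bU_A$ gives $\Ruz{\bU_A;\bU_V}\le (C'/2)\log K$ and hence $R=K^{C'}$; with $C'=10$ this yields $C=C'+1=11$, not $9$. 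The factor-of-two loss in the exponent ($2^{-2r}$) is intrinsic to passing through a Ruzsa-distance bound, and this route is already optimized in GGMT — it is not a matter of sharpening constants for characteristic $2$, since GGMT's $C=12$ bound is already a statement in $\bbF_2^n$ and already exploits $x=-x$.

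Second, the genuine improvements in the paper are two specific ideas that do not appear in your plan. (i) A tighter analysis of the ``endgame'' in the proof of the main lemma (bounding the growth of $\tau_A+\tau_B$ by $6\Ruz{\bX;\bY}$ instead of slightly more than that across the three endgame choices), which lowers the additive constant from $9$ to $8$ in Lemma~\ref{lemma:main} and hence $C'$ from $11$ to $10$. (ii) For the covering statement, bypassing the entropic conjecture entirely and instead applying Lemma~\ref{lemma:main} with a bespoke potential: $\tau^-(\bX)=\inf_{\bT}\KL{\bX}{\bU_A+\bT}$, $\tau^+(\bX)=\tau^-(\bX)+\ent{\bX}-\ent{\bU_A}$, and $\tau_A=\tau_B=\tfrac12(\tau^++\tau^-)$. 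This $\tau$ satisfies the required monotonicity properties (Lemma~\ref{lemma:KL-eq}), and crucially $\tau_A(\bU_A)=0$ — so the ``starting cost'' $\tau_A(\bU_A)+\tau_B(\bU_A)$ vanishes rather than contributing another $2\log K$. Moreover $\tau^-(\bU_V)+\tau^+(\bU_V)\le r$ yields $|A\cap(V+t)|\ge 2^{-r}\max(|A|,|V|)$ directly (Claim~\ref{lemma:slice}), avoiding the $2^{-2r}$ loss. Combining $8\Ruz{\bU_A;\bU_A}\le 8\log K$ with Lemma~\ref{lemma:slice-to-cover} gives $2K^9$. Your proposal, lacking the KL-based $\tau$ and the sharpened endgame estimate, cannot get past $C=11$ even if the intermediate claims were repaired.
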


In \cite{GGMT23}, it was proved that \Cref{conj:Marton} is true for $C=12$.\footnote{More precisely, their argument shows that $C=7+\sqrt{17}=11.1231...$.} In this paper, we show a few simple refinements to the proof in \cite{GGMT23} and prove that \Cref{conj:Marton} is true for $C=9$.
\noindent
\begin{theorem}\label{theorem:Marton}
\Cref{conj:Marton} is true for $C=9$.
\end{theorem}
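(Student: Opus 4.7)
The plan is to revisit the three-stage strategy of Gowers--Green--Manners--Tao~\cite{GGMT23} and sharpen the constants at each stage. Recall that GGMT reformulate Marton's conjecture as an entropic statement: for any $\bbF_2^n$-valued random variable $X$ with self Ruzsa distance $d[X;X] \le \log K$, there is a subgroup $H$ such that $X$ is close (in entropic distance) to the uniform distribution on a coset of $H$. This entropic PFR is proved by studying a minimizer $(X_1, X_2)$ of the Lagrangian
\[
  \tau[X_1, X_2] \;=\; d[X_1; X_2] + \eta\bigl(d[X_1; X_1^0] + d[X_2; X_2^0]\bigr),
\]
where $(X_1^0, X_2^0)$ is a fixed reference pair and $\eta > 0$ is a tuning parameter. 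A fibering (entropic Pl\"unnecke-Ruzsa) identity then forces $d[X_1; X_2] = 0$ at the minimizer, after which one concludes entropic PFR and, via Ruzsa's covering lemma, combinatorial PFR. The exponent $C = 7+\sqrt{17}$ of GGMT arises as the solution of a quadratic programme that balances $\eta$ against the decrement obtained from fibering and the loss incurred when entropic PFR is converted back into covering.

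My strategy is to re-derive this exponent, exploiting two kinds of slack that become available when one specializes from general bounded-torsion groups to $\bbF_2^n$. First, several Ruzsa-type triangle inequalities used inside the fibering step can be sharpened because $x = -x$ in $\bbF_2^n$: sums and differences coincide, so certain symmetrization manipulations are tautological and the associated factors can be dropped. I would re-derive the key distance-decrement inequality with these simplifications, aiming to reduce the linear coefficients controlling how $d[X_1;X_2]$ propagates through the auxiliary variables introduced by fibering. Second, the final conversion from entropic to combinatorial PFR uses a covering-lemma step whose exponent contributes heavily to the overall accounting; I would replace it by an $\bbF_2^n$-tailored variant that avoids the factor-of-two losses arising from symmetrization in the general-group setting.

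With these improvements in hand, I would re-solve the optimization in $\eta$. The coefficients entering the quadratic programme will be strictly smaller than those in GGMT, so the resulting exponent should drop from $7+\sqrt{17} \approx 11.12$ toward the target $9$. Concretely, I would write down the system of linear inequalities relating $d[X_1;X_2]$, $d[X_i; X_i^0]$ and the auxiliary entropies that must hold at the $\tau$-minimizer, substitute the sharpened coefficients, and minimize the exponent over $\eta$.

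The main obstacle I expect is the extreme tightness of the target: there is essentially no slack, so every inequality along the way must be invoked in its sharpest $\bbF_2^n$-specialized form, and the refinements in the fibering stage must remain compatible with those in the covering stage, since any tightening that changes the shape of the entropic conclusion also changes the combinatorial inequality it feeds into. A secondary difficulty is that some of the constants in the GGMT calculation are hidden inside auxiliary lemmas; each such lemma will need to be reopened and re-derived with its $\bbF_2^n$-specific constants made explicit before the final optimization can be carried out cleanly.
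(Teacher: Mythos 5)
Your plan correctly identifies the two places where slack might be found (the fibering/endgame analysis and the entropic-to-covering conversion), but the specific mechanisms you propose do not match the paper's and would not close the gap. First, the ``$x=-x$'' simplification you want to exploit in the fibering step buys nothing new: the proof in \cite{GGMT23} that yields $C=7+\sqrt{17}$ is already written for $\bbF_2^n$, so sums and differences already coincide throughout and there is no remaining symmetrization loss to remove. The paper's actual improvement in the fibering/endgame stage is different and quite specific: for each of the three endgame candidates $\bT\mid\obW,\bS$, $\bV\mid\obT,\bS$, $\bW\mid\obV,\bS$, one can bound the growth of $\tau_A+\tau_B$ \emph{in two ways} --- once using the ``sum'' condition $\tau(\bX+\bY)\le\tau(\bX)+\tfrac12(\ent{\bX+\bY}-\ent{\bX})$ starting from $\bT\mid\bS$, and once using the ``fibre'' condition starting from $\bX_1\mid\bW,\obW$ --- and \emph{averaging} the two (inequalities (\ref{eq:+1})--(\ref{eq:-3})) gives a strictly tighter bound. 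This is what lowers the endgame threshold and permits $\eta<1/8$ rather than $\eta<1/9$, giving the constant $8$ in \Cref{lemma:main}.

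Second, and more importantly, the step that gets you from an exponent near $10$--$11$ down to $9$ is not a sharper covering lemma. It is a change of the divergence functional itself: instead of $\tau_A(\cdot)=\Ruz{\bU_A;\cdot}$ (for which the starting value $\tau_A(\bU_A)=\Ruz{\bU_A;\bU_A}$ can be as large as $\log K$, costing a factor of $K^2$ in the final bound), the paper takes $\tau_A=\tau_B=\tfrac12(\tau^++\tau^-)$ built from the Kullback--Leibler divergence $\inf_{\bT}\KL{\bX}{\bU_A+\bT}$. This choice satisfies the same four axioms of \Cref{lemma:main} (so the whole machinery carries over unchanged), but has $\tau(\bU_A)=0$, and a bound $\tau^-(\bU_V)+\tau^+(\bU_V)\le r$ still yields $|A\cap(V+t)|\ge 2^{-r}\max(|A|,|V|)$ directly. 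That removes the wasted $K^2$ at the source rather than trying to tighten Ruzsa's covering lemma. Without this new choice of $\tau$, re-optimizing $\eta$ over a slightly improved system of linear inequalities will not reach $C=9$; you would at best recover something close to the entropic bound plus one. In short, you are missing both of the paper's two key ideas --- the double bounding of $\tau$ in the endgame, and the KL-based divergence functional with $\tau(\bU_A)=0$ --- and the substitute ideas you propose would not supply the needed savings.
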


In more details, recall that the entropic Ruzsa distance between two random variables $\bX,\bY$, denoted by $\Ruz{\bX;\bY}$, is defined as $$\Ruz{\bX;\bY}:=\ent{\bX'-\bY'}-\frac{1}{2}(\ent{\bX}+\ent{\bY}),$$
where $\bX',\bY'$ are independent copies of $\bX,\bY$.\footnote{Note that $\Ruz{\cdot;\cdot}$ can be viewed as a function on individual distributions of $\bX$ and $\bY$. Therefore, even if $(\bX,\bY)$ are correlated, the value $\Ruz{\bX;\bY}$ is defined as if they are independent.} It was observed in \cite{GMT23} that \Cref{conj:Marton} is equivalent to the following ``entropic variant", up to some loss in the constant.
\begin{conjecture}\label{conj:Marton-entropic}
For every pair of random variables $\bA,\bB$ on $\bbF_2^n$, there exists a subspace $V$ such that $\Ruz{\bA;\bU_V}+\Ruz{\bB;\bU_V}\le C' \Ruz{\bA;\bB}$ for some constant $C'$, where $\bU_V$ is uniform on $V$.
\end{conjecture}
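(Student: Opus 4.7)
The plan is to adapt the minimizer method of \cite{GGMT23}. The equivalence between Conjecture~\ref{conj:Marton} and Conjecture~\ref{conj:Marton-entropic}, observed in \cite{GMT23}, reduces the task to proving the entropic statement with a constant $C'$ that translates into $C=9$ via the Pl\"unnecke--Ruzsa and Ruzsa covering machinery; so I would spend my effort on the entropic formulation and keep a close eye on how $C'$ propagates through that translation.

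To attack Conjecture~\ref{conj:Marton-entropic} I would introduce a weighted potential
\[ \pot{\bA', \bB'} := \Ruz{\bA'; \bB'} + \eta \bigl( \Ruz{\bA; \bA'} + \Ruz{\bB; \bB'} \bigr) \]
with a parameter $\eta>0$ to be optimized, and take a minimizing pair $(\bA',\bB')$ among all $\bbF_2^n$-valued random variables. The aim is the standard dichotomy: either the minimum is achieved at uniforms $\bU_V, \bU_V$ on a coset of a subspace $V$, in which case unwinding $\eta(\Ruz{\bU_V; \bA} + \Ruz{\bU_V; \bB}) \le \pot{\bU_V, \bU_V} \le \pot{\bA, \bB} = \Ruz{\bA; \bB}$ directly yields the conjecture with $C' = 1/\eta$; or the minimum is not of this form, and I would derive a contradiction by producing a strictly smaller perturbation.

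To rule out non-uniform minimizers I would apply the first-order minimality condition along a carefully chosen family of perturbations, the essential ones being sums and conditioned sums of independent copies of $\bA'$ and $\bB'$, and then combine the resulting inequalities using the entropic fibring identity of \cite{GGMT23}. This combination should yield $\Ruz{\bA'; \bB'} \le 0$, forcing $\bA' - \bB'$ to be essentially constant and hence $\bA', \bB'$ to be uniform on cosets of a common subspace, which contradicts the non-uniform assumption.

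The main obstacle, and the whole point of the paper, is squeezing the constant. The improvement from the GGMT exponent $C \approx 11.12$ to $C=9$ should come from (i) a slightly more symmetric potential, possibly with asymmetric weights on the two correction terms or an extra cross-term like $\Ruz{\bA; \bB'}$; (ii) a sharper fibring step in which the perturbation family is chosen so that no avoidable slack enters the inequality $\Ruz{\bA'; \bB'} \le 0$; and (iii) re-optimizing $\eta$ and the entropic-to-combinatorial translation jointly rather than separately. Step (ii) is where I expect to work hardest: even a small loss in that step propagates through the logarithmic exchange between Ruzsa distance and doubling, so the final combinatorial exponent is very sensitive to how tightly fibring is applied.
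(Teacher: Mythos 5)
Your high-level architecture matches the paper's: fix a potential that is Ruzsa distance plus an $\eta$-weighted correction term that is translation-invariant, take a global minimizer over pairs of $\mathbb{F}_2^n$-valued random variables, and argue that the minimizer must have Ruzsa distance $0$ (hence is uniform on a coset of some subspace $V$), after which the bound on $\tau_A(\bU_V)+\tau_B(\bU_V)$ falls out of minimality. However, there are several concrete gaps.

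First, the sums-and-fibres step alone cannot force $\Ruz{\bA';\bB'}=0$. The fibring identities (\ref{eq:fib1})--(\ref{eq:fib2}) only yield that if none of the four sum/fibre moves decreases $\phi$, then the conditional mutual informations $I_1,I_2$ are small (at most $\approx 2\eta\Ruz{\bX;\bY}$). To get an actual contradiction you then need the \emph{endgame}: the entropic Balog--Szemer\'edi--Gowers inequality (\ref{eq:endgame}) applied to the three pairs among $\bT,\bV,\bW$ conditioned on $\bS$, which converts small $I_1,I_2$ into a strict decrease of $\phi$. Your outline omits this step entirely, and without it the argument simply does not close. This is not a detail; it is the half of the proof where the constant is actually determined.

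Second, you mis-locate the source of the improvement. The improved constant $C'=10$ (hence $C=11$ via the standard equivalence) does not come from a ``more symmetric potential'' or from asymmetric weights. The potential used is the same as in GGMT: $\tau_A(\cdot)=\Ruz{\bB;\cdot}$ and $\tau_B(\cdot)=\Ruz{\bA;\cdot}$ (note the crossing: $\tau_A$ measures distance from $\bB$, not $\bA$). The gain comes from a tighter upper bound on how much $\tau_A+\tau_B$ grows for the three endgame candidates: the paper shows the sum of the three growths is at most $6\Ruz{\bX;\bY}+(I_2-I_1)$ rather than the looser bound in GGMT, which permits the threshold $\eta<1/8$ (rather than the GGMT threshold $\eta<\frac{1}{2}(3-\sqrt{17}+\ldots)$), and $1/\eta + 2 = 10$. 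Relatedly, your identity $\phi[\bA,\bB]=\Ruz{\bA;\bB}$ is false for the potential you wrote down, since $\Ruz{\bA;\bA}$ and $\Ruz{\bB;\bB}$ do not vanish in general; with your (un-crossed) choice $\tau_A(\cdot)=\Ruz{\bA;\cdot}$ the starting value $\phi[\bA,\bB]=\Ruz{\bA;\bB}+\eta(\Ruz{\bA;\bA}+\Ruz{\bB;\bB})$ is controlled only via the triangle inequality, which loses a factor and gives a \emph{worse} constant than the crossed choice.

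Third, you describe the route to $C=9$ as ``re-optimizing $\eta$ and the entropic-to-combinatorial translation jointly.'' The paper's route is different and more specific: the entropic bound $C'=10$ only yields $C=11$ by the generic equivalence, and to get $C=9$ the paper re-runs the same Lemma~\ref{lemma:main} with a \emph{different} choice of $\tau_A=\tau_B=\frac12(\tau^++\tau^-)$ built from KL divergence to $\bU_A$, exploiting that $\tau^\pm(\bU_A)=0$ (as opposed to $\Ruz{\bA;\bA}\le\log K$) and that $\tau^+(\bU_V)+\tau^-(\bU_V)$ directly controls the density of $A$ in a coset of $V$ (Claim~\ref{lemma:slice}). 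Your proposal does not anticipate this change of $\tau$; as written it would stall at $C=11$.
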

\noindent
Our first result is an improved bound for the entropic version, from $C'=11$ in \cite{GGMT23} to $C'=10$.
\begin{theorem}\label{theorem:Marton-entropic}
\Cref{conj:Marton-entropic} is true for $C'=10$.
\end{theorem}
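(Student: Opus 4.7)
The plan is to adapt the entropic stability framework of \cite{GGMT23}, and optimize its parameters to push the final constant from $11$ down to $10$. Concretely, I would define a potential function of the form
\[
\pot{\bA; \bB} := \Ruz{\bA; \bB} + \eta \cdot \paren{\Ruz{\bA; \bU_V} + \Ruz{\bB; \bU_V}}
\]
for a parameter $\eta > 0$ to be fixed at the end, where $V = V(\bA, \bB)$ is the subspace minimizing the parenthesized expression. I would then pick a minimizer $(\bA^\ast, \bB^\ast)$ of $\pot{\cdot\,;\,\cdot}$ among all pairs of random variables on $\bbF_2^n$ with $\Ruz{\bA; \bB} > 0$, and argue that at such a minimizer the ratio $(\Ruz{\bA^\ast; \bU_V} + \Ruz{\bB^\ast; \bU_V}) / \Ruz{\bA^\ast; \bB^\ast}$ cannot exceed $10$ for a suitably tuned $\eta$. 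An appropriate scaling/limiting argument then propagates this ratio bound from the minimizer to an arbitrary pair $(\bA, \bB)$, yielding \Cref{conj:Marton-entropic} with $C' = 10$.

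The core technical step is the 4-copy fibering argument from \cite{GGMT23}: take independent copies $\bA_1, \bA_2$ of $\bA^\ast$ and $\bB_1, \bB_2$ of $\bB^\ast$, and analyze the conditional distributions obtained by fixing linear combinations such as $\bA_1 + \bA_2 + \bB_1 + \bB_2$, $\bA_1 + \bB_1$, and $\bA_2 + \bB_2$. Standard entropy chain-rule identities bound a weighted sum of conditional Ruzsa distances in terms of $\Ruz{\bA^\ast; \bB^\ast}$ together with the uniform-distance terms. Since each conditioned pair is itself a valid candidate in the minimization of $\phi$, its potential value is at least $\pot{\bA^\ast; \bB^\ast}$, and averaging over the fiber variable converts the entropy identity into a linear inequality relating $\Ruz{\bA^\ast; \bU_V} + \Ruz{\bB^\ast; \bU_V}$ to $\Ruz{\bA^\ast; \bB^\ast}$ with coefficients that depend on $\eta$.

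The main obstacle will be tightening the arithmetic. GGMT23 obtain $C' = 11$ from a specific convex combination of fibering inequalities, and saving an extra unit requires either a reweighting of that combination, a refinement of one of the underlying entropy inequalities exploiting the characteristic-$2$ structure of $\bbF_2^n$ (where sum and difference coincide, so the ``plus'' version of Ruzsa distance behaves more symmetrically), or a subtly different choice of auxiliary linear combinations of the four copies. I expect the cleanest path to $C' = 10$ to come from re-examining the exact coefficients in GGMT's key fibering lemma and re-optimizing $\eta$ against the sharpened inequality, rather than from introducing fundamentally new ingredients; the hard part is verifying that these local numerical savings compose coherently into the global minimizer argument without losing anywhere else.
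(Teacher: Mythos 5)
Your proposal captures the right broad toolbox (the four-copy fibring identities, the conditioned-on-sum pairs, the Balog--Szemer\'edi--Gowers endgame, and the idea of re-optimizing a parameter $\eta$), but the way you set up the potential function is structurally different from what is needed, and I don't see how it can be made to work as stated.

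The critical issue is that your potential
$\pot{\bA;\bB} = \Ruz{\bA;\bB} + \eta\paren{\Ruz{\bA;\bU_V}+\Ruz{\bB;\bU_V}}$
with $V = V(\bA,\bB)$ chosen to minimize the second term is a \emph{single} functional of the pair being optimized, with a self-referential $V$. The paper instead fixes the starting pair $(\bA,\bB)$ once and for all, sets $\tau_A(\cdot):=\Ruz{\bB;\cdot}$ and $\tau_B(\cdot):=\Ruz{\bA;\cdot}$, and then minimizes $\pot{\bX;\bY} := \Ruz{\bX;\bY}+\eta(\tau_A(\bX)+\tau_B(\bY))$ over auxiliary pairs $(\bX,\bY)$, keeping $\tau_A,\tau_B$ fixed. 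This distinction matters for two reasons. First, the contraction inequalities that drive the whole argument --- $\tau(\bX+\bY)\le\tau(\bX)+\tfrac12(\ent{\bX+\bY}-\ent{\bX})$ and $\tau(\bX\mid\bZ)\le\tau(\bX)+\tfrac12(\ent{\bX}-\ent{\bX\mid\bZ})$ --- are verified for a \emph{fixed} function $\tau$ of one variable; your $\inf_V(\Ruz{\bA;\bU_V}+\Ruz{\bB;\bU_V})$ is a joint functional of the pair with a moving $V$, and there is no reason it should satisfy any such one-sided chain-rule bound under fibring. Second, your plan is to minimize $\phi$ over pairs with $\Ruz{\bA;\bB}>0$, but the infimum of that $\phi$ over all pairs is $0$ (attained in the limit as $\Ruz\to 0$), so there is no minimizer to analyze; and even granting one, a bound on the ratio $(\Ruz{\bA^\ast;\bU_V}+\Ruz{\bB^\ast;\bU_V})/\Ruz{\bA^\ast;\bB^\ast}$ at that minimizer says nothing about the ratio at the original pair. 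The ``appropriate scaling/limiting argument'' you invoke to propagate the bound is the heart of the matter and is exactly what doesn't exist in this formulation. In the paper's setup, the propagation is automatic: since $\phi$ decreases from the starting pair to a pair with $\Ruz{\bX^\ast;\bY^\ast}=0$, one immediately reads off $\tau_A(\bU_V)+\tau_B(\bU_V)\le\tau_A(\bA)+\tau_B(\bB)+\tfrac1\eta\Ruz{\bA;\bB}=(2+\tfrac1\eta)\Ruz{\bA;\bB}$.

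Finally, your proposal is vague on where the actual saving from $11$ to $10$ comes from. It is not a characteristic-$2$ symmetry or a different choice of linear combinations; the specific new ingredient is a sharper bound on the growth of $\tau_A+\tau_B$ across the three endgame candidates (bounding each conditional $\tau$ two ways, once via the ``sum'' contraction and once via the ``fibre'' contraction, and summing), which shows the total endgame $\tau$-growth is at most $6\Ruz{\bX;\bY}$ plus lower-order $I_1,I_2$ terms. This is what permits raising the endgame threshold to $\eta<1/8$ (rather than $\eta<1/9$) and hence $C'=2+1/\eta\to 10$. Without identifying this concrete refinement, ``re-examining the coefficients'' does not yield a proof.
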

\noindent
Based on the equivalence established in \cite{GMT23} which shows that $C\le C'+1$, \Cref{theorem:Marton-entropic} already implies that \Cref{conj:Marton} is true with $C=11$. To further establish the $C=9$ bound, we show that a simple modification to the proof of \Cref{theorem:Marton-entropic} can directly imply a better bound for the covering variant.

\paragraph{Notation.} All logarithms in this paper will be of base $2$. We use $\ent{\cdot},\info{\cdot},D_{\mathrm{KL}}(\cdot)$ to denote Shannon entropy, mutual information and Kullback–Leibler divergence respectively. (We review their definitions and some basic properties in \Cref{appendix:definition}.) For any random variable $\bX$, we abuse notation and also use $\bX$ to denote its distribution. All random variables are in bold font. For random variable $\bX$ (and its distribution) we use $\Supp(\bX)$ to denote its support. For any set $S$, $\bU_S$ denotes a random variable that is uniform over $S$. 

For any real-valued function $f$ on random variables or distributions, and any pair of joint random variables $(\bX,\bZ)$, we use $f(\bX\mid\bZ)$ to denote $\ex[z\sim\bZ]{f(\bX|_{\bZ=z})}$. Similarly for a function $g(\cdot;\cdot)$ on two random variable we write $g((\bX;\bY)\mid\bZ)=\ex[z\sim\bZ]{g(\bX|_{\bZ=z};\bY|_{\bZ=z})}$. In addition, we sometimes write $g(\bX\mid \bZ ;\bY\mid \bW)=\ex[z\sim\bZ,w\sim\bW]{g(\bX|_{\bZ=z};\bY|_{\bW=w})}$ when $(\bX,\bZ)$ is independent of $(\bY,\bW)$.

\section{Recap of the Original Proof}
In this section we briefly recap the proof of entropic Marton's conjecture in \cite{GGMT23}. The outline of the proof is as follows. Fix two starting random variables $\bA$ and $\bB$ on $\bbF_2^n$, and let $\tau_A(\cdot),\tau_B(\cdot)$ be some functions over random variables in $\bbF_2^n$, so that $\tau_A(\bX),\tau_B(\bX)$ denote certain ``divergence" of a random variable $\bX$ from $\bA$ and $\bB$, and that $\tau_A,\tau_B$ are both invariant under translation. To prove the entropic version (\Cref{conj:Marton-entropic}) we should choose $\tau_A(\bX):=\Ruz{\bB;\bX}$ and $\tau_B(\bX):=\Ruz{\bA;\bX}$. The overall strategy is to start with $\bX=\bA,\bY=\bB$, and keep updating $\bX,\bY$ so that $\Ruz{\bX;\bY}$ always becomes strictly smaller than in the previous round. At the same time, we want to make sure that $\tau_A(\bX)$ and $\tau_B(\bY)$ do not grow too much in every round. Recall that $\Ruz{\bX;\bY}$ is non-negative, and is $0$ if and only if there exists a subspace $V$ such that $\bX$ and $\bY$ are both uniform over some translates of $V$. (See, e.g., \cite[Theorem 1.11]{Tao10}.) Therefore, when $\Ruz{\bX;\bY}$ eventually decreases to $0$, if at the same time we can show that $\tau_A(\bX)=\tau_A(\bU_V)$ and $\tau_B(\bY)=\tau_B(\bU_V)$ are not too large, then we are done. 

To update $\bX$ and $\bY$, the primary choices are either ``sums" or ``fibres". In more details, let $\bX_1,\bX_2$ be independent copies of $\bX$, and $\bY_1,\bY_2$ be independent copies of $\bY$ (that are also independent of $\bX_1,\bX_2$). Consider the following random variables:
\begin{align*}
\bT&:=\bX_1+\bY_1, & \obT&:=\bX_2+\bY_2,\\
\bV&:=\bX_1+\bY_2, & \obV&:=\bX_2+\bY_1,\\
\bW&:=\bX_1+\bX_2, & \obW&:=\bY_1+\bY_2.\\
\end{align*}
The typical next choice of $(\bX,\bY)$, denote by $(\bX',\bY')$, is either obtained by taking the sums: $(\bT,\obT)$ or $(\bW,\obW)$; or fibres: $(\bX_1|\bT, \bY_2|\obT)$ or $(\bX_1|\bW, \bY_1|\obW)$.\footnote{For joint random variables $(\bR,\bZ)$ we use $\bR|\bZ$ to denote $\bR|_{\bZ=z}$ for $z$ randomly sampled from $\bZ$. $z$ will be fixed in the end.} The following corollary of fibring lemma~\cite{GMT23,GGMT23} guarantees that at least one choice is not worse than the original $(\bX,\bY)$.
\begin{lemma}[{\cite[Corollary 4.2]{GGMT23}}]\label{lemma:fibring}
For independent random variables $\bR_1,\bR_2,\bR_3,\bR_4$ on $\bbF_2^n$,
\begin{align*}
\Ruz{\bR_1 ; \bR_3}+\Ruz{\bR_2 ; \bR_4}&=\Ruz{\bR_1+\bR_2 ; \bR_3+\bR_4}\\
&+\Ruz{\bR_1\mid\bR_1+\bR_2 \,\,; \,\,\bR_3\mid\bR_3+\bR_4}\\
&+\info{\bR_1+\bR_2:\bR_1+\bR_3\mid\bR_1+\bR_2+\bR_3+\bR_4}.
\end{align*}
\end{lemma}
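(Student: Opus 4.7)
The plan is to verify the identity by direct entropy bookkeeping, leaning on two independence facts that follow from the mutual independence of $\bR_1,\bR_2,\bR_3,\bR_4$: (i) $\bR_1+\bR_2$ is independent of $\bR_3+\bR_4$, and (ii) $\bR_1+\bR_3$ is independent of $\bR_2+\bR_4$ (since $(\bR_1,\bR_3)\perp(\bR_2,\bR_4)$). Together with the trivial fact that $-x=x$ in $\bbF_2^n$, these two observations allow every Ruzsa term in sight to be expanded into a linear combination of Shannon entropies of ``pure'' expressions in the $\bR_i$'s, after which everything cancels.

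First I would expand the LHS. Because the $\bR_i$ are already mutually independent, the definition of Ruzsa distance gives directly $\Ruz{\bR_1;\bR_3}+\Ruz{\bR_2;\bR_4}=\ent{\bR_1+\bR_3}+\ent{\bR_2+\bR_4}-\tfrac12\sum_i\ent{\bR_i}$. Next I would expand the first two RHS terms. The term $\Ruz{\bR_1+\bR_2;\bR_3+\bR_4}$ unfolds immediately. For the conditional Ruzsa term I would observe that conditional on $\bR_1+\bR_2=s$ and $\bR_3+\bR_4=t$, the slices $\bR_1\mid_{\bR_1+\bR_2=s}$ and $\bR_3\mid_{\bR_3+\bR_4=t}$ are still independent and their sum has the conditional distribution of $\bR_1+\bR_3$; taking expectations in $s,t$ yields $\ent{\bR_1+\bR_3\mid\bR_1+\bR_2,\bR_3+\bR_4}-\tfrac12(\ent{\bR_1\mid\bR_1+\bR_2}+\ent{\bR_3\mid\bR_3+\bR_4})$. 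Using $\bR_1\perp\bR_2$ (resp.\ $\bR_3\perp\bR_4$), the chain rule rewrites $\ent{\bR_1\mid\bR_1+\bR_2}=\ent{\bR_1}+\ent{\bR_2}-\ent{\bR_1+\bR_2}$ (and similarly for the other slice), which exactly cancels the $\ent{\bR_1+\bR_2}$ and $\ent{\bR_3+\bR_4}$ contributions coming from $\Ruz{\bR_1+\bR_2;\bR_3+\bR_4}$. Summing the two expanded Ruzsa terms therefore gives $\ent{\bR_1+\bR_2+\bR_3+\bR_4}+\ent{\bR_1+\bR_3\mid\bR_1+\bR_2,\bR_3+\bR_4}-\tfrac12\sum_i\ent{\bR_i}$.

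For the mutual-information term, set $\bZ:=\bR_1+\bR_2+\bR_3+\bR_4$ and write $\info{\bR_1+\bR_2:\bR_1+\bR_3\mid\bZ}=\ent{\bR_1+\bR_3\mid\bZ}-\ent{\bR_1+\bR_3\mid\bR_1+\bR_2,\bZ}$. Since $\bR_3+\bR_4=\bZ+(\bR_1+\bR_2)$ in $\bbF_2^n$, conditioning on $(\bR_1+\bR_2,\bZ)$ is the same as conditioning on $(\bR_1+\bR_2,\bR_3+\bR_4)$, so the subtracted term equals the conditional entropy produced above. Meanwhile, independence fact (ii) says $\bR_1+\bR_3$ and $\bR_2+\bR_4$ are independent with sum $\bZ$, giving $\ent{\bR_1+\bR_3\mid\bZ}=\ent{\bR_1+\bR_3}+\ent{\bR_2+\bR_4}-\ent{\bZ}$. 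Adding this to the expression from the previous paragraph, the two $\ent{\bZ}$ contributions and the two copies of $\ent{\bR_1+\bR_3\mid\bR_1+\bR_2,\bR_3+\bR_4}$ annihilate each other, leaving exactly $\ent{\bR_1+\bR_3}+\ent{\bR_2+\bR_4}-\tfrac12\sum_i\ent{\bR_i}$, which is the LHS.

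The main ``obstacle'' is purely bookkeeping: the identity is a linear relation among Shannon entropies with no inequality anywhere. The only two non-routine observations are spotting independence fact (ii), and noticing that once $\bR_1+\bR_2$ is in the conditioning, $\bR_3+\bR_4$ and $\bZ$ generate the same $\sigma$-algebra, so they can be freely swapped. Given these, the remainder of the argument is mechanical cancellation.
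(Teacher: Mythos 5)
Your calculation is correct: expanding each Ruzsa and mutual-information term via independence and the $\bbF_2^n$ identity $-x=x$ does reduce the claimed equality to a mechanical cancellation of Shannon entropies, and I verified that all the cancellations go through (in particular the crucial observations that $(\bR_1+\bR_2,\bZ)$ and $(\bR_1+\bR_2,\bR_3+\bR_4)$ generate the same $\sigma$-algebra, and that $\bR_1+\bR_3$ and $\bR_2+\bR_4$ are independent with sum $\bZ$).

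Note, however, that the paper itself does not prove this lemma; it is cited verbatim as Corollary~4.2 of \cite{GGMT23}. In that reference the statement is derived by specializing a more general fibring lemma for an arbitrary homomorphism $\pi\colon H\to H'$ of abelian groups (take $H=G\times G$, $H'=G$, $\pi(x,y)=x+y$, $Z_1=(\bR_1,\bR_2)$, $Z_2=(\bR_3,\bR_4)$). Your argument inlines that specialization and carries out the entropy bookkeeping directly, which is perfectly valid and arguably more transparent for this particular instance; what you give up is the reusability of the general homomorphism form, which \cite{GGMT23} uses elsewhere. One cosmetic remark: you invoke $-x=x$ to simplify the computation, which ties the write-up to $\bbF_2^n$; the underlying identity actually holds over any abelian group (with $\bR_1-\bR_3$, $\bR_2-\bR_4$ in place of the sums), and keeping the signs would make the argument group-agnostic at no extra cost.
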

\noindent
Now define $\bS:=\bX_1+\bX_2+\bY_1+\bY_2$, $I_1:=\info{\bT:\bV\mid\bS}$, and $I_2:=\info{\bT:\bW\mid\bS}=\info{\bV:\bW\mid\bS}$. One can obtain the following equations by substituting $(\bR_1,\bR_2,\bR_3,\bR_4)$ in \Cref{lemma:fibring} with $(\bX_1,\bY_1,\bY_2,\bX_2)$ and $(\bX_1,\bX_2,\bY_1,\bY_2)$:

\begin{align}
\Ruz{\bT;\obT}+\Ruz{\bX_1|\bT;\bY_2|\obT}&=2\Ruz{\bX;\bY}-I_1\label{eq:fib1}\\
\Ruz{\bW;\obW}+\Ruz{\bX_1|\bW;\bY_1|\obW}&=2\Ruz{\bX;\bY}-I_2\label{eq:fib2}
\end{align}
By (\ref{eq:fib1}) and (\ref{eq:fib2}), for at least one choice of $(\bX',\bY')$ from the sums or fibres, we can get $\Ruz{\bX';\bY'}\le\Ruz{\bX;\bY}$. 
We also record the identity obtained by substituting with $(\bX_1,\bY_1,\bX_2,\bY_2)$:
$$\Ruz{\bT;\obT}+\Ruz{\bX_1|\bT;\bX_2|\obT}=\Ruz{\bX;\bX}+\Ruz{\bY;\bY}-I_2$$
Together with (\ref{eq:fib1}) and the fact that $\Ruz{\bX_1|\bT;\bY_2|\obT}=\Ruz{\bX_1|\bT;\bX_2|\obT}$ we get 
\begin{align}\label{eq:xxyy}
\Ruz{\bX;\bX}+\Ruz{\bY;\bY}=2\Ruz{\bX;\bY}+(I_2-I_1),
\end{align}
which helps simplify some computation later. 

Note that when picking only from sums or fibres, there is a chance that we make no progress, or the progress is too small to compensate the increase in $\tau_A,\tau_B$. However, in this case we get that $I_1,I_2$ are both very small, which means $\bT,\bV,\bW$ are almost pairwise independent when conditioned on $\bS$. This is called the ``endgame" in \cite{GGMT23}. In the endgame, when conditioned on of $\bS$ we roughly have that $\Ruz{\bT;\bV}$ is very close to $\ent{\bT+\bV}-\frac{1}{2}(\ent{\bT}+\ent{\bV})=\ent{\bW}-\frac{1}{2}(\ent{\bT}+\ent{\bV})$, and similarly $\Ruz{\bV;\bW}$ is close to $\ent{\bT}-\frac{1}{2}(\ent{\bV}+\ent{\bW})$ and $\Ruz{\bT;\bW}$ is close to $\ent{\bU}-\frac{1}{2}(\ent{\bT}+\ent{\bW})$. Because the sum of these approximations is exactly $0$, the best choice of pair from $\{\bT|_{\bS=s},\bV|_{\bS=s},\bW|_{\bS=s}\}$ should have very small entropic Ruzsa distance. This intuition can be formalized by the entropic Balog–Szemer\'{e}di–Gowers lemma~\cite{Tao10,GGMT23} as follows.
\begin{lemma}[{\cite[Lemma A.2]{GGMT23}}]
Let $G$ be an additive group and $(\bR_1,\bR_1)$ be random variables on $G^2$. Then 
$$\Ruz{(\bR_1;\bR_2)\mid \bR_1+\bR_2}\le 3\info{\bR_1:\bR_2}+2\ent{\bR_1+\bR_2}-\ent{\bR_1}-\ent{\bR_2}.$$
\end{lemma}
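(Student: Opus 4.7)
The plan is to unfold the conditional Ruzsa distance into an entropy of a difference of independent copies from the conditional marginals, then apply the entropic Ruzsa triangle inequality with a carefully chosen auxiliary random variable, and finally bound the resulting cross-entropies via subadditivity together with the joint structure of $(\bR_1,\bR_2)$.

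\textbf{Step 1: Reducing to a sum-entropy inequality.} Let $\bS:=\bR_1+\bR_2$. By definition of conditional Ruzsa distance,
\[
\Ruz{(\bR_1;\bR_2)\mid\bS} = \ex[s\sim\bS]{\ent{\bR_1'-\bR_2''\mid\bS=s}} - \tfrac{1}{2}\paren*{\ent{\bR_1\mid\bS}+\ent{\bR_2\mid\bS}},
\]
where, for each $s$, $\bR_1'$ and $\bR_2''$ denote \emph{independent} samples of $\bR_1|_{\bS=s}$ and $\bR_2|_{\bS=s}$ (this independence is part of the definition of Ruzsa distance on individual distributions). Since $\bR_2=\bS-\bR_1$ conditionally on $\bS$, the bijection $x\mapsto s-x$ gives $\ent{\bR_1\mid\bS}=\ent{\bR_2\mid\bS}=\ent{\bR_1,\bR_2}-\ent{\bS}$. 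Substituting this and the identity $\info{\bR_1:\bR_2}=\ent{\bR_1}+\ent{\bR_2}-\ent{\bR_1,\bR_2}$, the target inequality reduces after arithmetic to
\[
\ex[s\sim\bS]{\ent{\bR_1'-\bR_2''\mid\bS=s}} \;\le\; 2\info{\bR_1:\bR_2}+\ent{\bS}.
\]

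\textbf{Step 2: Ruzsa triangle inequality conditionally on $\bS$.} Take $(\bR_1^*,\bR_2^*)$ to be an independent copy of $(\bR_1,\bR_2)$, independent also of every other variable in sight. Recall the entropic Ruzsa triangle inequality: for independent $\bX,\bY,\bZ$, $\ent{\bX-\bY}+\ent{\bZ}\le\ent{\bX-\bZ}+\ent{\bY-\bZ}$. Applied conditionally on $\bS=s$ with $(\bX,\bY,\bZ)=(\bR_1',\bR_2'',\bR_j^*)$ for $j\in\{1,2\}$ (the three are conditionally independent given $\bS$), and averaged over $s\sim\bS$, this yields
\[
\ex[s]{\ent{\bR_1'-\bR_2''\mid\bS=s}} + \ent{\bR_j} \;\le\; \ent{\bR_1-\bR_j^*\mid\bS} + \ent{\bR_2-\bR_j^*\mid\bS}.
\]
I would sum/average this over $j\in\{1,2\}$ in order to symmetrize.

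\textbf{Step 3: Controlling the cross entropies.} Each cross term $\ent{\bR_i-\bR_j^*\mid\bS}$ is bounded by exploiting that $(\bR_i,\bS)$ determines $(\bR_1,\bR_2)$ and that $\bR_j^*$ is independent of $(\bR_1,\bR_2,\bS)$. The chain rule gives $\ent{\bR_i-\bR_j^*,\bS}\le\ent{\bR_i,\bR_j^*,\bS}=\ent{\bR_1,\bR_2}+\ent{\bR_j}$, hence $\ent{\bR_i-\bR_j^*\mid\bS}\le\ent{\bR_1,\bR_2}+\ent{\bR_j}-\ent{\bS}$. Substituting these bounds and rewriting $\ent{\bR_1,\bR_2}$ via $\info{\bR_1:\bR_2}$ should produce the $2\info{\bR_1:\bR_2}+\ent{\bS}$ upper bound from Step~1.

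\textbf{Main obstacle.} The most delicate point is getting the sharp constants $3$ and $2$. A naive execution of the plan with the crude cross-term bound of Step~3 gives constants that are too large: the subadditivity bound on $\ent{\bR_i-\bR_j^*\mid\bS}$ has a slack equal to $\ent{\bR_j^*\mid\bR_i-\bR_j^*,\bS}$ that must be accounted for. To recover the tight constants I expect one has to (i)~symmetrize the Ruzsa-triangle step over the two choices $j\in\{1,2\}$, and (ii)~replace the naive subadditivity step by a sharper inequality -- for instance, by applying Ruzsa's triangle inequality once more on the cross terms, or by invoking the Kaimanovich--Vershik submodularity $\ent{\bX+\bY+\bZ}+\ent{\bZ}\le\ent{\bX+\bZ}+\ent{\bY+\bZ}$. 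The bookkeeping of these corrections, and verifying that the deficits add up exactly to $3\info{\bR_1:\bR_2}$ rather than $4\info{\bR_1:\bR_2}$ or more, is the bulk of the work.
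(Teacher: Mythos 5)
The paper does not prove this lemma itself: it states it verbatim as a citation to \cite{GGMT23} (Lemma~A.2) and uses it as a black box. So there is no in-paper proof to compare your attempt against, and the assessment has to rest on the proposal's own merits.

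Your Step~1 is correct: with $\bS=\bR_1+\bR_2$, the identities $\ent{\bR_1\mid\bS}=\ent{\bR_2\mid\bS}=\ent{\bR_1,\bR_2}-\ent{\bS}$ and $\info{\bR_1:\bR_2}=\ent{\bR_1}+\ent{\bR_2}-\ent{\bR_1,\bR_2}$ reduce the claim to $\ex[s\sim\bS]{\ent{\bR_1'-\bR_2''}}\le 2\info{\bR_1:\bR_2}+\ent{\bS}$, where $\bR_1',\bR_2''$ are independent draws from the conditional marginals. Step~2's conditional Ruzsa triangle with the fresh copy $\bR_j^*$ is also fine as stated. The failure is in Step~3, and it is worse than a bookkeeping slack: summing $\ent{\bR_i-\bR_j^*\mid\bS}\le\ent{\bR_1,\bR_2}+\ent{\bR_j}-\ent{\bS}$ over $i,j\in\{1,2\}$ and plugging into Step~2 gives $\ex[s\sim\bS]{\ent{\bR_1'-\bR_2''}}\le 2\ent{\bR_1,\bR_2}+\tfrac12(\ent{\bR_1}+\ent{\bR_2})-2\ent{\bS}$. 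This is not a near-miss of the target. If $\bR_1,\bR_2$ are independent and uniform on a subgroup $H$, the target and the true left-hand side are both $\log\abs{H}$, while your bound evaluates to $3\log\abs{H}$. Note also that the $j$-symmetrization is already built into this computation, so item~(i) of your ``Main obstacle'' paragraph cannot rescue it, and item~(ii) is a gesture toward Kaimanovich--Vershik without an actual derivation. As written the proposal is therefore not a proof.

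A concrete diagnosis of what is structurally off: the intermediary $\bR_j^*$ is sampled unconditionally, hence ``lives outside'' the slice $\{\bS=s\}$ in which the Ruzsa triangle inequality is being applied, and the cost $\ent{\bR_j}$ it carries is too large relative to the in-slice entropy $\ent{\bR_j\mid\bS}$; that gap is exactly what inflates the final constant. The known proofs of this entropic BSG statement (Tao's, and the one in \cite{GGMT23}) instead work with two \emph{conditionally} independent copies $(\bR_1^{(1)},\bR_2^{(1)})$, $(\bR_1^{(2)},\bR_2^{(2)})$ of the pair given $\bS$, exploit the in-slice identity $\bR_1^{(1)}-\bR_2^{(2)}=\bR_1^{(1)}+\bR_1^{(2)}-\bS$, and apply submodularity to quantities that remain inside the conditioning, which is how $\info{\bR_1:\bR_2}$ enters with the right sign and coefficient. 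Replacing your unconditional $\bR_j^*$ by a conditionally independent copy is the first change I would make.
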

\noindent
If we substitute $(\bR_1,\bR_2)$ with $(\bT,\bV),(\bV,\bW),(\bW,\bT)$ conditioned on $\bS$ and take their sum, we get the following inequality:
\begin{equation}\label{eq:endgame}
\Ruz{(\bT;\bV)\mid \obW,\bS}+\Ruz{(\bV;\bW)\mid \obT,\bS}+\Ruz{(\bW;\bT)\mid\obV,\bS}\le 3I_1+6I_2,
\end{equation}
which implies that the best pair from $\{\bT|\bS,\bV|\bS,\bW|\bS\}$ has entropic Ruzsa distance at most $I_1+2I_2$.\footnote{In fact, every pair has entropic Ruzsa distance at most $I_1+2I_2$. See \cite[p.23]{GGMT23}.}

It remains to prove that $\tau_{A},\tau_{B}$ do not grow too much with each choice. For simplicity, think of $I_1$ and $I_2$ as equal. In this case it was proved in \cite{GGMT23} that $\tau_A(\bX')+\tau_B(\bY')\le \tau_A(\bX)+\tau_B(\bY)+\Ruz{\bX:\bY}$ for $(\bX',\bY')$ picked from sums or fibres. In the endgame, it was also proved that the growth of $\tau_A(\bX)+\tau_B(\bY)$ in the three possible choices only sum up to slightly larger than $6\Ruz{\bX:\bY}$. Therefore, one can either update $(\bX,\bY)$ to be a choice $(\bX',\bY')$ from sums or fibres that satisfies $\Ruz{\bX';\bY'}\le (8/9) \Ruz{\bX;\bY}$; or proceed to the endgame otherwise, where $I_1,I_2\le (2/9)\Ruz{\bX;\bY}$ and there exists $(\bX',\bY')$ such that $\Ruz{\bX';\bY'}\le (6/9)\Ruz{\bX;\bY}$. Because the growth in $\tau_A(\bX)+\tau_B(\bY)$ is at most $9$ times the decrease in $\Ruz{\bX;\bY}$, when $\Ruz{\bX;\bY}$ decreases from $\Ruz{\bA;\bB}$ to $0$, $\tau_A(\bX)+\tau_B(\bY)$ only grows by at most $9\Ruz{\bA;\bB}$ from the starting value $\tau_A(\bA)+\tau_B(\bB)=2\Ruz{\bA;\bB}$. Therefore we get the bound $\Ruz{\bA;\bU_V}+\Ruz{\bB;\bU_V}\le 11\Ruz{\bA;\bB}$ in \cite{GGMT23}.

\section{Improved Bound for Entropic Variant}
In this work, our improvement in \Cref{theorem:Marton-entropic} comes from a tighter bound for the growth of $\tau_A(\bX)+\tau_B(\bY)$ in the endgame. In the $I_1=I_2$ case, we show that the growth of $\tau_A(\bX)+\tau_B(\bY)$ for three endgame choices sum up to at most $6\Ruz{\bX:\bY}$. Therefore we can pick the threshold for endgame to be $I_1,I_2\le \Ruz{\bX;\bY}/4$ instead, and improve the constant factor to $10$ correspondingly. First we state our formal claim.
\begin{lemma}\label{lemma:main}
Let $\tau_A,\tau_B$ be any real-valued functions on distributions over $\bbF_2^n$. Suppose that every $\tau\in\{\tau_A,\tau_B\}$ satisfies the following properties for any independent random variables $\bX,\bY$ on $\bbF_2^n$, and any $\bZ$ correlated with $\bX$:
\begin{itemize}
\item 
$\tau\paren{\bX+\bY}\le \tau\paren{\bX}+\frac{1}{2}\paren{\ent{\bX+\bY}-\ent{\bX}}$
\item 
$\tau\paren{\bX\mid\bZ}\le  \tau\paren{\bX}+\frac{1}{2}\paren{\ent{\bX}-\ent{\bX\mid\bZ}}$
\item
$\tau(\bX+s)=\tau(\bX)$ for every $s\in\bbF_2^n$
\item
$\tau$ is continuous
\end{itemize}
Then there exists a subspace $V$ that $\tau_A(\bU_V)+\tau_B(\bU_V)\le \tau_A(\bX)+\tau_B(\bY)+8\Ruz{\bX;\bY}$ for any random variables $\bX,\bY$.
\end{lemma}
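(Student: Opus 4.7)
The plan is to prove \Cref{lemma:main} by an iterative descent on the Ruzsa distance, controlled by the potential
\[
\phi(\bX,\bY) := \tau_A(\bX) + \tau_B(\bY) + 8\,\Ruz{\bX;\bY}.
\]
The core reduction is to show that whenever $\Ruz{\bX;\bY}>0$, one may replace $(\bX,\bY)$ by a pair $(\bX',\bY')$ with $\phi(\bX',\bY') \le \phi(\bX,\bY)$ and $\Ruz{\bX';\bY'} < \Ruz{\bX;\bY}$. Iterating gives a sequence whose Ruzsa distance decreases at least geometrically (by the quantitative bounds below), so by compactness of the simplex of distributions on $\bbF_2^n$ together with the assumed continuity of $\tau_A,\tau_B$, the sequence accumulates at a pair $(\bX^\infty,\bY^\infty)$ with $\Ruz{\bX^\infty;\bY^\infty}=0$ and $\phi(\bX^\infty,\bY^\infty) \le \phi(\bX,\bY)$. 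The standard structure theorem for $\Ruz=0$ then forces $\bX^\infty,\bY^\infty$ to be uniform on translates of a single subspace $V$, and translation invariance upgrades this to $\tau_A(\bU_V)+\tau_B(\bU_V)=\tau_A(\bX^\infty)+\tau_B(\bY^\infty)\le\phi(\bX,\bY)$, the desired conclusion.

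\paragraph{Non-endgame replacement.}
Write $\rho := \Ruz{\bX;\bY}$. In the regime $\max(I_1,I_2) > \rho/4$, equations \eqref{eq:fib1}--\eqref{eq:fib2} supply a choice $(\bX',\bY')$ in $\{(\bT,\obT),\,(\bX_1\mid\bT,\bY_2\mid\obT),\,(\bW,\obW),\,(\bX_1\mid\bW,\bY_1\mid\obW)\}$ with $\Ruz{\bX';\bY'} \le \rho - \max(I_1,I_2)/2 \le (7/8)\rho$. Combining the sum rule with the fibre rule on each candidate, using $\ent{\bT}-\tfrac12(\ent{\bX}+\ent{\bY})=\rho$ and identity \eqref{eq:xxyy}, shows that the $\bT$-based choices have growth of $\tau_A+\tau_B$ bounded by exactly $\rho$, while the $\bW$-based choices have growth bounded by $\rho + (I_2-I_1)/2$. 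Selecting the $\bT$-based choice when $I_1 \ge I_2$ and the $\bW$-based choice when $I_2 > I_1$ keeps growth $\le \rho$ and decrease $\ge \rho/8$, so $\phi(\bX',\bY') \le \phi(\bX,\bY)$ with strict Ruzsa-distance decrease.

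\paragraph{Endgame replacement.}
In the regime $I_1,I_2 \le \rho/4$, the replacement is drawn from one of three candidate pairs in $\{\bT\mid\bS,\bV\mid\bS,\bW\mid\bS\}$ (refined by conditioning on the appropriate pair sum as in \eqref{eq:endgame}), with $\tau_A$ assigned to the ``$\bX$-like'' slot and $\tau_B$ to the ``$\bY$-like'' slot in each. Expanding $\tau_A+\tau_B$ on each pair by sum-rule-then-fibre-rule and summing the growths $g_1,g_2,g_3$ across the three cyclic candidates, the algebraic identities $\bT+\bV=\obW,\ \bV+\bW=\obT,\ \bW+\bT=\obV$ and \eqref{eq:xxyy} should collapse the entropy terms to yield $g_1+g_2+g_3 \le 6\rho$. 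Coupled with \eqref{eq:endgame}, which gives $\sum_i \Ruz{\bX'_i;\bY'_i} \le 3I_1+6I_2 \le (9/4)\rho$, one obtains $\sum_i\bigl(g_i+8\Ruz{\bX'_i;\bY'_i}\bigr) \le 6\rho+18\rho = 3\cdot 8\rho$, so by averaging some index $i$ satisfies $\phi(\bX'_i,\bY'_i) \le \phi(\bX,\bY)$; the bound $\min_i \Ruz{\bX'_i;\bY'_i} \le I_1+2I_2 \le (3/4)\rho$ simultaneously secures the strict $\Ruz$-decrease requirement.

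\paragraph{Main obstacle.}
The essential new input relative to \cite{GGMT23} is the sharp endgame growth bound $g_1+g_2+g_3 \le 6\rho$ with the \emph{exact} constant $6$; the corresponding estimate in \cite{GGMT23} carries an additional $O(I_1,I_2)$ correction that forces both the endgame threshold and the final constant upward. Carrying the $I_1 \ne I_2$ asymmetry cleanly through the sum-then-fibre decomposition of the three cyclic pairs — and verifying that the asymmetry corrections telescope to $0$ thanks to the cyclic symmetry of $\bT,\bV,\bW$ together with \eqref{eq:xxyy} — is the principal calculation I would need to perform carefully. The remaining ingredients (the non-endgame case split, the passage to the limit, and the final subspace extraction via translation invariance) closely follow the template of \cite{GGMT23}.
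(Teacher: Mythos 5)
Your plan follows the same architecture as the paper's proof (potential with coefficient $8 = 1/\eta$ at the boundary, case split on $I_1,I_2$, sum-then-fibre estimates via \eqref{eq:fib1}--\eqref{eq:fib2} and \eqref{eq:xxyy} for the non-endgame, and the BSG-type estimate \eqref{eq:endgame} plus a sharpened $\tau$-growth bound for the endgame). The key improvement you identified — that the endgame growth can be bounded by roughly $6\,\Ruz{\bX;\bY}$ rather than $6\,\Ruz{\bX;\bY} + O(\Ruz{\bX;\bY})$ — is exactly the paper's new ingredient. One structural difference: the paper avoids iterative descent entirely by taking a global minimizer of $\phi$ over the compact simplex and showing it must have $\Ruz = 0$ (\Cref{lemma:decrease}), using $\eta < 1/8$ strictly so that the displacement inequality is strict; your limit-of-iterates route works too but needs more care about whether $\phi$ strictly decreases, which is exactly the tension you run into by setting the coefficient to exactly $8$.

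On the technical content, two of your stated claims are not quite right, though both are repaired by the bookkeeping the paper actually does. First, the endgame growth bound is \emph{not} exactly $6\,\Ruz{\bX;\bY}$: the asymmetry corrections do not telescope away. The paper's computation \eqref{eq:tau-eg} gives $g_1+g_2+g_3 \le 6\,\Ruz{\bX;\bY} + (I_2 - I_1)$, with a genuine residual $(I_2 - I_1)$. The reason the final constant is unaffected is that when you combine with $\sum_i \Ruz_i \le 3I_1 + 6I_2$ from \eqref{eq:endgame}, the linear form $23 I_1 + 49 I_2$ (after multiplying the Ruzsa sum by $8$) is still maximized at $18\,\Ruz{\bX;\bY}$ over the feasible endgame region, giving $24\,\Ruz{\bX;\bY}$ total. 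Second, your non-endgame dichotomy ``pick $\bT$ if $I_1 \ge I_2$, pick $\bW$ if $I_2 > I_1$ to keep growth $\le \rho$'' is false for the $\bW$ branch: by \eqref{eq:W+}/\eqref{eq:W-} the $\bW$-growth is $\rho + (I_2 - I_1)/2 > \rho$ precisely when $I_2 > I_1$, so the claimed inequality fails. Relatedly, the clean threshold ``endgame iff $I_1, I_2 \le \rho/4$'' is not what failure of all four non-endgame candidates actually yields; the paper derives $I_1 \le 2\eta\rho$ together with $I_2 \le \frac{2\eta\rho - \eta I_1}{1-\eta}$ (see \eqref{eq:I2}), which permits $I_2$ somewhat above $\rho/4$ when $I_1$ is small, and the endgame estimate must be run against this weaker constraint. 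So the outline is the right one, but you should not expect the asymmetry corrections to vanish, and you should not expect a flat $\rho/4$ threshold; both discrepancies are exactly compensated by the slack in \eqref{eq:endgame}, which is what the paper's final displayed chain of inequalities verifies.
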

\noindent 
Note that \Cref{lemma:main} directly implies \Cref{theorem:Marton-entropic}:
\begin{proof}[Proof of \Cref{theorem:Marton-entropic}]
If both $\Ruz{\bA;\cdot}$ and $\Ruz{\bB;\cdot}$ satisfy all the conditions in \Cref{lemma:main} then we can substitute $\tau_A(\cdot),\tau_B(\cdot)$ with $\Ruz{\bB;\cdot},\Ruz{\bA;\cdot}$ respectively and conclude that there exists a subspace $V$ s.t. $\Ruz{\bU_V;\bA}+\Ruz{\bU_V,\bB}\le 10\Ruz{\bA;\bB}$. The third and fourth condition are trivially true. The first condition was proved in \cite[Lemma 5.2]{GGMT23}. For the second condition, observe that for every joint random variables $(\bX,\bZ)$ and any independent $\bR$ we have 
$$\Ruz{\bR;\bX\mid\bZ}-\Ruz{\bR;\bX}=\ent{\bR+\bX\mid\bZ}-\ent{\bR+\bX}+\frac{1}{2}(\ent{\bX}-\ent{\bX\mid\bZ})\le \frac{1}{2}(\ent{\bX}-\ent{\bX\mid\bZ}).$$

\end{proof}

As in \cite{GGMT23}, to prove \Cref{lemma:main} it suffices to prove the following lemma. 
\begin{lemma}\label{lemma:decrease}
Let $\eta<1/8$ be a real parameter, $\tau_A,\tau_B$ be any functions that satisfy the conditions in \Cref{lemma:main}, and define $\pot{\bX;\bY}:=\Ruz{\bX;\bY}+\eta(\tau_A(\bX)+\tau_B(\bY))$. Then for any $\bbF_2^n$-valued random variables $\bX,\bY$ such that $\Ruz{\bX;\bY}>0$, there exist $\bbF_2^n$-valued random variables $\bX',\bY'$ such that
$\pot{\bX';\bY'}<\pot{\bX;\bY}$.
\end{lemma}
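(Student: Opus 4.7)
The plan is to follow the iterative strategy of \cite{GGMT23} recapped in Section 2, but with a sharper endgame $\tau$-growth estimate. I form $\bX_1,\bX_2,\bY_1,\bY_2,\bT,\obT,\bV,\obV,\bW,\obW,\bS,I_1,I_2$ exactly as in Section 2, and consider seven candidate pairs for $(\bX',\bY')$: the four \emph{sums/fibres} pairs $(\bT,\obT)$, $(\bW,\obW)$, $(\bX_1\mid\bT,\bY_2\mid\obT)$, $(\bX_1\mid\bW,\bY_1\mid\obW)$, and the three \emph{endgame} pairs obtained by assigning the roles of $\bX'$ and $\bY'$ to two of $\{\bT\mid\bS,\bV\mid\bS,\bW\mid\bS\}$. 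For the endgame pairs, an averaging argument (made rigorous using the continuity of $\tau$) will let me replace the expectation over $\bS$ by a single good realization $\bS=s$. I then case-split on whether $\max(I_1,I_2)>\Ruz{\bX;\bY}/4$.

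If $\max(I_1,I_2)>\Ruz{\bX;\bY}/4$, one of the sums/fibres candidates already suffices. By \eqref{eq:fib1} (if $I_1\ge I_2$) or \eqref{eq:fib2} (otherwise), the better of the two candidates attached to that equation has Ruzsa distance at most $\Ruz{\bX;\bY}-\max(I_1,I_2)/2$. Applying properties 1 and 2 of $\tau$ candidate-by-candidate, and using \eqref{eq:xxyy} to absorb the asymmetry between $\bX$ and $\bY$ that appears in the $\bW$-based candidates, recovers the growth bound $\tau_A(\bX')+\tau_B(\bY')\le\tau_A(\bX)+\tau_B(\bY)+\Ruz{\bX;\bY}$ of \cite[Section 5]{GGMT23}. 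Combining,
$$\pot{\bX';\bY'}-\pot{\bX;\bY}\le -\tfrac{\max(I_1,I_2)}{2}+\eta\Ruz{\bX;\bY}<0$$
since $\eta<1/8$ and $\max(I_1,I_2)/2>\Ruz{\bX;\bY}/8$.

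Otherwise $I_1,I_2\le\Ruz{\bX;\bY}/4$, and I pass to the endgame. Summing \eqref{eq:endgame} over the three endgame pairs gives the aggregate Ruzsa-distance bound $\sum_{i=1}^{3}\Ruz{\bX'_i;\bY'_i}\le 3I_1+6I_2\le\tfrac{9}{4}\Ruz{\bX;\bY}$. The key new ingredient, which I state as a separate sub-claim, is that the three endgame $\tau$-growths sum to at most $6\Ruz{\bX;\bY}$. Granted this,
\begin{equation*}
\sum_{i=1}^{3}\paren{\pot{\bX'_i;\bY'_i}-\pot{\bX;\bY}}\le \tfrac{9}{4}\Ruz{\bX;\bY}-3\Ruz{\bX;\bY}+6\eta\Ruz{\bX;\bY}=\paren{6\eta-\tfrac{3}{4}}\Ruz{\bX;\bY}<0,
\end{equation*}
so at least one endgame candidate strictly decreases $\phi$.

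The main obstacle is proving the $6\Ruz{\bX;\bY}$ endgame growth bound, which is exactly where the constant improves over \cite{GGMT23}. For each endgame pair I would estimate the $\tau_A$-growth by first writing the chosen element of $\{\bT,\bV,\bW\}$ as a sum of two independent copies drawn from $\bX$ or $\bY$ (property 1) and then conditioning on $\bS$ (property 2), and symmetrically for $\tau_B$. Each individual growth then becomes a half-sum of entropy differences such as $\ent{\bT\mid\bS}+\ent{\bV\mid\bS}-\ent{\bX}-\ent{\bY}$. Summed over the three pairs, the symmetric roles of $\bT,\bV,\bW$, the identity $\bT+\bV+\bW=\bS$ over $\bbF_2^n$, and \eqref{eq:xxyy} should make the entropy bookkeeping telescope: each of $\tau_A$ and $\tau_B$ appears exactly three times, giving the factor $6$, and the remaining entropy terms collapse to $\Ruz{\bX;\bY}$.
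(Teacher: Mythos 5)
Your overall plan is correct and is essentially the same as the paper's: the same candidate pairs, the same use of \eqref{eq:fib1}, \eqref{eq:fib2}, \eqref{eq:xxyy}, \eqref{eq:endgame}, and the same improved endgame $\tau$-growth bound combining conditions 1 and 2 of \Cref{lemma:main}. However, there is a concrete gap in your Case 1 that comes precisely from the step you are glossing over with ``using \eqref{eq:xxyy} to absorb the asymmetry.'' That identity does not \emph{remove} the asymmetry; it \emph{quantifies} it. Following the paper's \eqref{eq:W+}, \eqref{eq:W-}, the correct growth bound for the $\bW$-based candidates is
\[
\tau_W^\pm \le \tau_0 + \Ruz{\bX;\bY} + \tfrac12(I_2-I_1),
\]
not $\tau_0+\Ruz{\bX;\bY}$. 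Consequently, the condition under which the $\bW$-branch of \eqref{eq:fib2} strictly decreases $\phi$ is $I_2(1-\eta)+\eta I_1 > 2\eta\Ruz{\bX;\bY}$, not the cleaner $I_2 > 2\eta\Ruz{\bX;\bY}$ you implicitly use. When $\eta$ is close to $1/8$ and $I_1\approx 0$, this threshold is roughly $\tfrac{2\eta}{1-\eta}\Ruz{\bX;\bY}\approx\tfrac27\Ruz{\bX;\bY} > \tfrac14\Ruz{\bX;\bY}$, so there is a regime (e.g.\ $I_1=0$, $I_2\in(\tfrac14\Ruz{\bX;\bY},\tfrac{2\eta}{1-\eta}\Ruz{\bX;\bY})$) where your Case 1 applies ($\max(I_1,I_2)>\Ruz{\bX;\bY}/4$) but neither the $\bT$-based nor the $\bW$-based sums/fibres candidates is shown to improve $\phi$, and Case 2 (which requires $I_1,I_2\le\Ruz{\bX;\bY}/4$) also does not apply. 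Your fixed threshold at $\Ruz{\bX;\bY}/4$ is only valid for $\eta\le 1/9$, which would degrade the final constant back to the original value.

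Two ways to repair this. First, and this is what the paper does, do not case-split on a numerical threshold at all: check directly whether any of the four sums/fibres candidates decreases $\phi$, and if none does, read off the implied constraints $I_1\le 2\eta\Ruz{\bX;\bY}$ and $I_2-2\eta\Ruz{\bX;\bY}\le\tfrac{\eta}{1-\eta}(2\eta\Ruz{\bX;\bY}-I_1)$ from \eqref{eq:T-endgame}--\eqref{eq:W-endgame}, then carry these exact constraints (not the clean $I_1,I_2\le\Ruz{\bX;\bY}/4$) into the endgame inequality. Second, the endgame $\tau$-growth bound you state as a sub-claim should, if you carry out your telescoping sketch carefully using both conditions of \Cref{lemma:main}, come out as $3\tau_0+6\Ruz{\bX;\bY}+(I_2-I_1)$ rather than $3\tau_0+6\Ruz{\bX;\bY}$; as it happens your Case-2 arithmetic still closes under either form (the coefficients of $I_1$ and $I_2$ become $3\mp\eta$ and $6+\eta$, summing to $9$), so this imprecision is not fatal on its own, but be aware that your stated clean bound is not what the argument actually produces when $I_2\ne I_1$.
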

\noindent
To see why the lemma above implies \Cref{lemma:main}, let $(\bX^*,\bY^*)$ be a minimizer of $\phi$, which exists because $\phi$ is continuous and distributions over $\bbF_2^n$ are compact. By \Cref{lemma:decrease} it must be the case that $\Ruz{\bX^*;\bY^*}=0$, which implies a subspace $V$ such that $\tau_A(\bU_V)=\tau_A(\bX^*)$ and $\tau_B(\bU_V)=\tau_A(\bY^*)$. Because $(\bX^*,\bY^*)$ minimizes $\phi$, we can conclude that $\tau_A(\bU_V)+\tau_B(\bU_V)\le \tau_A(\bX)+\tau_B(\bY)+\frac{1}{\eta}\Ruz{\bX;\bY}\le \tau_A(\bX)+\tau_B(\bY)+8\Ruz{\bX;\bY}$ for any $(\bX,\bY)$.

\begin{proof}[Proof of \Cref{lemma:decrease}]
First we define random variables $\bX_1,\bX_2,\bY_2,\bY_2,\bT,\bV,\bW,\obT,\obV,\obW,\bS$ and real values $I_1,I_2$ based on $\bX,\bY$ as in the previous section. In addition, let $\tau_0=\tau_A(\bX)+\tau_B(\bY)$, and further define the following values:
\begin{align*}
\tau^+_T&:=\tau_A(\bT)+\tau_B(\obT), & \tau^-_T&:=\tau_A(\bX_1|\bT)+\tau_B(\bY_2|\obT),\\
\tau^+_W&:=\tau_A(\bW)+\tau_B(\obW), & \tau^-_W&:=\tau_A(\bX_1|\bW)+\tau_B(\bY_1|\obW).\\
\end{align*}
Each of these values can be bounded by roughly $\tau_0+\Ruz{\bX;\bY}$ as in the following inequalities. (For $\tau^+$ we use the first condition in \Cref{lemma:main}. For $\tau^-$ we use the second condition in \Cref{lemma:main} and the fact that $\ent{\bR_1}-\ent{\bR_1|\bR_1+\bR_2}=\ent{\bR_1+\bR_2}-\ent{\bR_2}$ for independent $\bR_1,\bR_2$. This identity will also be used later when bounding $\tau$ in the endgame.)
\begin{align}
\tau_T^+
&\le \tau_A(\bX)+\frac{1}{2}(\ent{\bX_1+\bY_1}-\ent{\bX})+\tau_B(\bY)+\frac{1}{2}(\ent{\bX_2+\bY_2}-\ent{\bY})\nonumber\\
&= \tau_0+\Ruz{\bX;\bY}\label{eq:T+}\\
\tau_W^+
&\le \tau_A(\bX)+\frac{1}{2}(\ent{\bX_1+\bX_2}-\ent{\bX})+\tau_B(\bY)+\frac{1}{2}(\ent{\bY_1+\bY_2}-\ent{\bY})\nonumber\\
&= \tau_0+\frac{1}{2}\Ruz{\bX;\bX}+\frac{1}{2}\Ruz{\bY;\bY}\nonumber\\
&= \tau_0+\Ruz{\bX;\bY}+\frac{1}{2}(I_2-I_1)  \textrm{    (by (\ref{eq:xxyy}))} \label{eq:W+}\\
\tau_T^-
&\le \tau_A(\bX)+\frac{1}{2}(\ent{\bX_1+\bY_1}-\ent{\bY})+\tau_B(\bY)+\frac{1}{2}(\ent{\bX_2+\bY_2}-\ent{\bX})\nonumber\\
&= \tau_0+\Ruz{\bX;\bY}\label{eq:T-}\\
\tau_W^-
&\le \tau_A(\bX)+\frac{1}{2}(\ent{\bX_1+\bX_2}-\ent{\bX})+\tau_B(\bY)+\frac{1}{2}(\ent{\bY_1+\bY_2}-\ent{\bY})\nonumber\\
&= \tau_0+\frac{1}{2}\Ruz{\bX;\bX}+\frac{1}{2}\Ruz{\bY;\bY}\nonumber\\
&= \tau_0+\Ruz{\bX;\bY}+\frac{1}{2}(I_2-I_1)\label{eq:W-}
\end{align}

If either $\Ruz{\bT;\obT} + \eta\tau^+< \Ruz{\bX;\bY}+\eta \tau_0$ or $\Ruz{\bW;\obW}+\eta\tau^+_W< \Ruz{\bX;\bY}+\eta\tau_0$, then the claim holds for $(\bX',\bY')=(\bT,\obT)$ or $(\bX',\bY')=(\bW,\bW')$. If $\Ruz{\bX_1|\bT;\bY_2|\obT}+ \eta \tau^0_T< \Ruz{\bX;\bY}+\eta\tau_0$ then the claim holds for $(\bX',\bY')=(\bX_1|_{\bT=t},\bY_2|_{\obT=t'})$, where $t,t'$ minimizes $\pot{\bX_1|_{\bT=t};\bY_2|_{\obT=t'}}$.\footnote{Note that $\pot{\bX_1|\bT;\bY_2|\bT'}:=\ex[t\sim\bT,t'\sim\bT']{\pot{\bX_1|_{\bT=t};\bY_2|_{\obT=t'}}}=\Ruz{\bX_1|\bT;\bY_2|\obT}+\eta\tau^-_T$ by linearity of expectation.} Similarly if $\Ruz{\bX_1|\bW;\bY_1|\obW}+\eta\tau_0< \Ruz{\bX;\bY}+\eta\tau_W^-$ we are also done. If none of these conditions hold, then we are in the endgame and 
\begin{align}
\Ruz{\bT;\obT}&\ge\Ruz{\bX;\bY}-\eta(\tau^+_T-\tau_0),& \Ruz{\bX_1|\bT;\bY_2|\obT}&\ge \Ruz{\bX;\bY}-\eta(\tau^-_T-\tau_0),\label{eq:T-endgame}\\
\Ruz{\bW;\obW}&\ge\Ruz{\bX;\bY}-\eta(\tau^+_W-\tau_0),& \Ruz{\bX_1|\bW;\bY_1|\obW}&\ge \Ruz{\bX;\bY}-\eta(\tau^-_W-\tau_0).\label{eq:W-endgame}
\end{align}
From (\ref{eq:fib1}), (\ref{eq:T+}), (\ref{eq:T-}), (\ref{eq:T-endgame}) we have 
$I_1\le 2\eta \Ruz{\bX;\bY}$,
and from (\ref{eq:fib2}), (\ref{eq:W+}), (\ref{eq:W-}), (\ref{eq:W-endgame}) we have $I_2\le 2\eta \Ruz{\bX;\bY}+\eta(I_2-I_1)$, or equivalently 
\begin{align}\label{eq:I2}
I_2-2\eta\Ruz{\bX;\bY}\le \frac{\eta}{1-\eta} (2\eta\Ruz{\bX;\bY}-I_1).
\end{align}

Next we bound $\tau_A+\tau_B$ in the endgame. This is where our improvement comes from. Note that for any $\tau\in\{\tau_A,\tau_B\}$ we have $\tau(\bT\mid\obW,\bS)=\tau(\bV\mid\obW,\bS)$, $\tau(\bT\mid\obV,\bS)=\tau(\bV\mid\obT,\bS)$, $\tau(\bW\mid\obT,\bS)=\tau(\bW\mid\obV,\bS)$, so it suffices to bound $\tau(\bT\mid\obW,\bS)$, $\tau(\bT\mid\obV,\bS)$ and $\tau(\bW\mid\obT,\bS)$. In addition, because $\tau$ is invariant under translation, $\tau(\obT\mid\bS)=\tau(\obT+\bS\mid\bS)=\tau(\bT\mid\bS)$. By the second condition in \Cref{lemma:main} we get
\begin{align*}
\tau_A(\bT\mid\obW,\bS) &\le \tau_A(\bT\mid\bS) + \frac{1}{2}\info{\bT:\obW\mid \bS} \\
                    &=\tau_A(\bT\mid\bS) + \frac{1}{2}\info{\bT:\bW\mid \bS} \\
                     &\le  \tau_A(\bT) + \frac{1}{2}(\ent{\bS}-\ent{\obT}) + \frac{1}{2}I_2
\end{align*}
and
\begin{align*}
\tau_B(\bT\mid\obW,\bS) &\le \tau_B(\bT\mid\bS) + \frac{1}{2}\info{\bT:\obW\mid \bS} \\
                        &= \tau_B(\obT\mid\bS) + \frac{1}{2}\info{\bT:\bW\mid \bS} \\
                     &\le  \tau_B(\obT) + \frac{1}{2}(\ent{\bS}-\ent{\bT}) + \frac{1}{2}I_2.
\end{align*}
Take the sum of the above two inequalities, we get 
\begin{align}
\tau_A(\bT\mid\obW,\bS)+\tau_B(\bT\mid\obW,\bS) &\le  \tau_T^+ + \Ruz{\bT;\obT} + I_2.\label{eq:+1}
\end{align}
Apply a similar argument for $\tau(\bT\mid\obV,\bS)$ and $\tau(\bW\mid\obT,\bS)$ we get 
\begin{align}
\tau_A(\bT\mid\obV,\bS)+\tau_B(\bT\mid\obV,\bS) &\le  \tau_T^+ + \Ruz{\bT;\obT} + I_1,\label{eq:+2}\\
\tau_A(\bW\mid\obT,\bS)+\tau_B(\bW\mid\obT,\bS) &\le  \tau_W^+ + \Ruz{\bW;\obW} + I_2.\label{eq:+3}
\end{align}
In addition, by applying the first condition in \Cref{lemma:main}, 
\begin{align*}
\tau_A(\bT\mid\obW,\bS) &= \tau_A(\bT\mid \bW,\obW)\\
                    &\le \tau_A(\bX_1\mid\bW,\obW) + \frac{1}{2}(\ent{\bX_1+\bY_1\mid\bW,\obW}-\ent{\bX_1\mid\bW,\obW})\\
\tau_B(\bT\mid\obW,\bS) &= \tau_B(\bT\mid \bW,\obW)\\
                    &\le \tau_B(\bY_1\mid\bW,\obW) + \frac{1}{2}(\ent{\bX_1+\bY_1\mid\bW,\obW}-\ent{\bY_1\mid\bW,\obW})
\end{align*}
Take the sum of the above two inequalities, and use the fact that $(\bX_1,\bW)$ is independent of $(\bY_1,\obW)$, we can obtain
\begin{align}
\tau_A(\bT\mid\obW,\bS) + \tau_B(\bT\mid\obW,\bS) \le \tau_W^- + \Ruz{\bX_1|\bW;\bY_1;\obW}\label{eq:-1}
\end{align}
and similarly 
\begin{align}
\tau_A(\bT\mid\obV,\bS)+\tau_B(\bT\mid\obV,\bS) &\le  \tau_T^- + \Ruz{\bX_1\mid \bV; \bY_1\mid \obV} = \tau_T^- + \Ruz{\bX_1\mid \bT; \bY_2\mid \obT} \label{eq:-2},\\
\tau_A(\bW\mid\obT,\bS)+\tau_B(\bW\mid\obT,\bS) &\le  \tau_T^- + \Ruz{\bX_1\mid \bT;\bY_2\mid \obT}. \label{eq:-3}
\end{align}
Take the sum of (\ref{eq:+1}), (\ref{eq:+2}), (\ref{eq:+3}), (\ref{eq:-1}), (\ref{eq:-2}), (\ref{eq:-3}) and apply the fibring identities (\ref{eq:fib1}), (\ref{eq:fib2}) we get 
\begin{align}
&2 (\tau_A(\bT\mid\obW,\bS) + \tau_B(\bT\mid\obW,\bS)+ \tau_A(\bT\mid\obV,\bS) + \tau_B(\bT\mid\obV,\bS) + \tau_A(\bW\mid\obT,\bS)+\tau_B(\bW\mid\obT,\bS))\nonumber\\
&\le 2\tau_T^+ + 2\tau_T^- +\tau_W^++\tau_W^- + 6\Ruz{\bX;\bY} +  (I_2-I_1)\nonumber\\
&\le 6\tau_0 + 12\Ruz{\bX;\bY} + 2(I_2-I_1). \textrm{   (by (\ref{eq:T-}), (\ref{eq:T+}), (\ref{eq:W-}), (\ref{eq:W+}))}\label{eq:tau-eg}
\end{align}
Now define 
\begin{align*}
\tau_{\mathrm{eg}}&:=\tau_A(\bT\mid\obW,\bS) + \tau_B(\bV\mid\obW,\bS)+ \tau_A(\bV\mid\obT,\bS) + \tau_B(\bT\mid\obV,\bS) + \tau_A(\bW\mid\obV,\bS)+\tau_B(\bW\mid\obT,\bS)\\
&=\tau_A(\bT\mid\obW,\bS) + \tau_B(\bT\mid\obW,\bS)+ \tau_A(\bT\mid\obV,\bS) + \tau_B(\bT\mid\obV,\bS) + \tau_A(\bW\mid\obT,\bS)+\tau_B(\bW\mid\obT,\bS),
\end{align*}
which by (\ref{eq:tau-eg}) is at most $3\tau_0 + 6\Ruz{\bX;\bY} + (I_2-I_1)$. Then observe that 
\begin{align*}
&\pot{(\bT;\bV)\mid \obW,\bS}+\pot{(\bV;\bW)\mid \obT,\bS}+\pot{(\bW;\bT)\mid\obV,\bS}\\
&= \Ruz{(\bT;\bV)\mid \obW,\bS}+\Ruz{(\bV;\bW)\mid \obT,\bS}+\Ruz{(\bW;\bT)\mid\obV,\bS}+\eta \tau_{\mathrm{eg}}\\
&\le 3I_1+6I_2 + \eta(3\tau_0 +  6\Ruz{\bX;\bY} + (I_2-I_1))\textrm{ (by (\ref{eq:endgame}))}\\
&=  24\eta\Ruz{\bX;\bY}+3\eta\tau_0 -(3+\eta)(2\eta\Ruz{\bX;\bY}-I_1) + (6+\eta)(I_2-2\eta\Ruz{\bX;\bY})\\
&= 24\eta\Ruz{\bX;\bY}+3\eta\tau_0 -\left((3+\eta)- \frac{6\eta+\eta^2}{1-\eta}\right)(2\eta\Ruz{\bX;\bY}-I_1) 
\textrm{           (by (\ref{eq:I2}))}\\
&< 3\pot{\bX;\bY}. \textrm{ (by $\eta<1/8$ and $2\eta\Ruz{\bX;\bY}-I_1\ge 0$)}
\end{align*}
Therefore either $\pot{(\bT;\bV)\mid \obW,\bS}$, $\pot{(\bV;\bW)\mid \obT,\bS}$ or $\pot{(\bW;\bT)\mid\obV,\bS}$ is less $\pot{\bX;\bY}$. If $\pot{(\bT;\bV)\mid \obW,\bS}<\pot{\bX;\bY}$ then we can take $(\bX',\bY')=(\bT|_{\obW=w,\bS=s},\bV|_{\obW=w,\bS=s})$ for $w,s$ to be the fixing that minimizes $\pot{\bT|_{\obW=w,\bS=s};\bV|_{\obW=w,\bS=s}}$, which should be at most the average $\pot{(\bT;\bV)\mid \obW,\bS}<\pot{\bX;\bY}$. Similar argument also works for the other two cases.
\end{proof}
\section{Improved Bound for Covering Variant}
In this section we prove \Cref{conj:Marton}. First of all, recall the following argument which can be found in the proof of (\Cref{conj:Marton-entropic}$\Rightarrow$\Cref{conj:Marton}) in \cite{GMT23,GGMT23}. We restate the proof for completeness.
\begin{lemma}\label{lemma:slice-to-cover}
For $A\subseteq \bbF_2^n$ such that $\abs{A+A}\le K\abs{A}$, if there exists a linear subspace $V$ and $t\in\bbF_2^n$ such that $\abs{A\cap (V+t)}\ge\max (\abs{A},\abs{V})/R$. Then $A$ can be covered by at most $2KR$ translates of a subspace $V'$ of size at most $\abs{A}$.
\end{lemma}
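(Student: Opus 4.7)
The plan is to apply the Ruzsa covering lemma to the dense slice $A_1 := A\cap (V+t)$, obtaining a cover of $A$ by few translates of $V$, and then to split into two cases according to whether $V$ is already of size at most $\abs{A}$ (in which case $V' = V$ works directly) or is strictly larger (in which case $V$ must first be refined to a smaller subspace $V'\subseteq V$ of size at most $\abs{A}$).

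First I would translate $A$ so that WLOG $t=0$, and set $A_1:=A\cap V\subseteq V$; the hypothesis then reads $\abs{A_1}\ge \max(\abs{A},\abs{V})/R$. Since $\abs{A+A_1}\le \abs{A+A}\le K\abs{A}$, the Ruzsa covering lemma yields a set $X$ with $\abs{X}\le K\abs{A}/\abs{A_1}$ and $A\subseteq X+A_1-A_1\subseteq X+V$ (using $A_1\subseteq V$ and that $V$ is closed under subtraction). In the regime $\abs{V}\le \abs{A}$, the bound $\abs{A_1}\ge \abs{A}/R$ immediately gives $\abs{X}\le KR$, so $V':=V$ finishes the claim.

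When $\abs{V}>\abs{A}$, I would instead pick any subspace $V'\subseteq V$ of dimension $\lfloor \log_2\abs{A}\rfloor$; this is a proper subspace of $V$ with $\abs{A}/2<\abs{V'}\le \abs{A}$. Each translate of $V$ then decomposes into $\abs{V}/\abs{V'}<2\abs{V}/\abs{A}$ cosets of $V'$, so using $\abs{A_1}\ge \abs{V}/R$ the total number of translates of $V'$ covering $A$ is at most $\abs{X}\cdot(\abs{V}/\abs{V'})\le (KR\abs{A}/\abs{V})\cdot(2\abs{V}/\abs{A})=2KR$. The main step requiring any care is this second case: one has to check that a subspace $V'\subseteq V$ of the right size exists (it does, since $\abs{V}>\abs{A}\ge \abs{V'}$), and to observe that the two sources of loss — the factor $K\abs{A}/\abs{A_1}$ from Ruzsa covering and the factor $\abs{V}/\abs{V'}$ from partitioning $V$ into cosets of $V'$ — telescope against the hypothesis $\abs{A_1}\ge \abs{V}/R$ to produce exactly the bound $2KR$.
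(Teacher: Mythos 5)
Your proposal is correct and follows essentially the same route as the paper: apply the Ruzsa covering lemma to the dense slice to cover $A$ by roughly $K\abs{A}/\abs{A_1}$ translates of $V$, then split into cases $\abs{V}\le\abs{A}$ (take $V'=V$) and $\abs{V}>\abs{A}$ (refine each coset of $V$ into at most $2\abs{V}/\abs{A}$ cosets of a subspace $V'\subseteq V$ of size in $(\abs{A}/2,\abs{A}]$, and telescope). The only cosmetic difference is that you make the existence of $V'$ explicit by fixing its dimension to $\lfloor\log_2\abs{A}\rfloor$, where the paper simply asserts it.
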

\begin{proof}
Let $B=A\cap (V+t)$. By Ruzsa's covering lemma~\cite[Lemma 2.14]{TV06}, $A$ can be covered by at most $\abs{A+B}/\abs{B}$ translates of $B-B\subseteq V$. Then observe that $\abs{A+B}/\abs{B}\le \abs{A+A}/\abs{B}\le K\abs{A}/\abs{B}\le KR\cdot \min(1,\abs{A}/\abs{V})$. If $\abs{V}\le\abs{A}$ then we are done with $V'=V$. Otherwise, there exists a subspace $V'$ of size at most $\abs{A}$ such that $V$ can be covered by at most $2\abs{V}/\abs{A}$ translates of $V'$. In this case $A$ can also be covered by at most $(2\abs{V}/\abs{A})\cdot KR(\abs{A}/\abs{V})=2KR$ translates of $V'$.
\end{proof}

The implication (\Cref{conj:Marton-entropic}$\Rightarrow$\Cref{conj:Marton}) comes from the fact that $\Ruz{\bU_A;\bU_V}\le r$ implies the existence of $t$ such that $\abs{A\cap (V+t)}\ge\max (\abs{A},\abs{V})/2^{2r}$. Our further improved bound in the covering variant comes from an alternative choice of $\tau_A=\tau_B$ such that $\tau_A(\bU_A)=0$, but at the same time a bound on $\tau_A(\bU_V)\le r$ is still strong enough to imply $\abs{A\cap (V+t)}\ge\max (\abs{A},\abs{V})/2^{2r}$. Therefore, compared to the choice $\tau_A(\cdot)=\Ruz{\bU_A;\cdot}$ where $\tau_A(\bU_A)\le\log(K)$ we save a factor of $K^2$.

Now fix a set $A$, and define
\begin{align*}
\tau^-\paren{\bX}&:=\inf_{\bT}\KL{\bX}{\bU_A+\bT},\\
\tau^+\paren{\bX}&:=\tau^-\paren{\bX}+\ent{\bX}-\ent{\bU_A}.
\end{align*}
Our choice of $\tau_A,\tau_B$ when applying \Cref{lemma:main} would be $\frac{1}{2}(\tau^++\tau^-)$. Note that both $\tau^+\paren{\bU_A}$ and $\tau^-\paren{\bU_A}$ are $0$, with the choice of minimizer being $\bT=0$. Before we prove that $\tau_A$ indeed satisfies our claim, first we need some simple properties for Kullback-Leibler divergence.
\begin{lemma}\label{lemma:main-eq}
Let $\bX,\bY,\bZ$ be independent random variables on $\bbF_2^n$, and $\bW$ be any random variable correlates with $\bX$. Then
\begin{itemize}
\item 
$\KL{\bX+\bZ}{\bY+\bZ}\le \KL{\bX}{\bY}$.
\item 
$\KL{\paren{\bX\mid \bW}}{\bY}= \KL{\bX}{\bY} + \ent{\bX}-\ent{\bX\mid\bW}$.
\end{itemize}
\end{lemma}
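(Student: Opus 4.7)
For the first inequality (the data-processing inequality for KL), I plan to prove it by conditioning on $\bZ$. For any fixed $z\in\bbF_2^n$, the map $u\mapsto u+z$ is a bijection of $\bbF_2^n$, so translation preserves KL divergence: $\KL{\bX+z}{\bY+z}=\KL{\bX}{\bY}$. Since $\bZ$ is independent of both $\bX$ and $\bY$, the distributions of $\bX+\bZ$ and $\bY+\bZ$ are the convex mixtures over $z\sim \bZ$ of these translated distributions. By joint convexity of $\KL{\cdot}{\cdot}$ in its two arguments --- a standard consequence of the log-sum inequality, which I would invoke as a black box ---
$$\KL{\bX+\bZ}{\bY+\bZ}\ \le\ \ex[z\sim \bZ]{\KL{\bX+z}{\bY+z}}\ =\ \KL{\bX}{\bY}.$$

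For the identity, the plan is to unfold the paper's conditioning convention, writing
$$\KL{\paren{\bX\mid\bW}}{\bY}=\sum_w \Pr[\bW=w]\sum_x \Pr[\bX=x\mid\bW=w]\log\frac{\Pr[\bX=x\mid\bW=w]}{\Pr[\bY=x]},$$
then multiplying and dividing inside the logarithm by $\Pr[\bX=x]$ and splitting into two sums. Using $\sum_w \Pr[\bX=x,\bW=w]=\Pr[\bX=x]$, the first sum collapses to $\sum_x \Pr[\bX=x]\log\frac{\Pr[\bX=x]}{\Pr[\bY=x]}=\KL{\bX}{\bY}$, while the second sum is by definition the mutual information $\info{\bX:\bW}=\ent{\bX}-\ent{\bX\mid\bW}$. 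Adding the two contributions yields the claimed identity.

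Neither part presents a real obstacle; both are textbook manipulations of KL divergence. The only point of mild care is to respect the paper's convention that in $\KL{\paren{\bX\mid\bW}}{\bY}$ the conditional expectation averages only the left argument of $\KL{\cdot}{\cdot}$ while leaving the target $\bY$ fixed --- it is precisely this asymmetry that forces the correction term to take the one-sided form $\ent{\bX}-\ent{\bX\mid\bW}$ rather than something symmetric in $\bX$ and $\bY$.
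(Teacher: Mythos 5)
Your proof is correct and takes essentially the same approach as the paper on both parts: for the inequality, the paper likewise combines translation-invariance of KL with convexity (the joint convexity you invoke); for the identity, the paper packages your direct computation through the cross-entropy $\ent{\bX : \bY} = \KL{\bX}{\bY} + \ent{\bX}$ and its conditioning-invariance $\ent{\bX : \bY} = \ent{\bX \mid \bW : \bY}$, which is the same split you obtain by multiplying and dividing inside the logarithm by $\Pr[\bX = x]$.
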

\begin{proof}
The first inequality is by convexity and the fact that $\KL{\bX+z}{\bY+z}=\KL{\bX}{\bY}$. The second equality is by the fact that $\ent{\bX:\bY}=\ent{\bX|\bW:\bY},$
where $\ent{\bX:\bY}$ denotes the cross-entropy $\ex[x\sim\bX]{-\log(\pr{\bY=x})}=\KL{\bX}{\bY}+\ent{\bX}$.
\end{proof}
\noindent
Now we prove that $\tau^-\paren{\bU_V}$ exactly captures the density of $A\cap (V+t)$.
\begin{lemma}
$\tau^-\paren{\bU_V}=\log(|A|)-\log(\max_t\abs{A\cap (V+t)})$.
\end{lemma}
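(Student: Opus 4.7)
The plan is to compute $\tau^-\paren{\bU_V}$ by establishing matching upper and lower bounds on the infimum $\inf_{\bT}\KL{\bU_V}{\bU_A + \bT}$.

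For the lower bound, I would first expand the KL divergence directly:
\[
\KL{\bU_V}{\bU_A + \bT} \;=\; -\log\abs{V} - \frac{1}{\abs{V}}\sum_{v\in V}\log \pr{\bU_A + \bT = v}.
\]
By concavity of $\log$ (Jensen's inequality applied to the uniform average over $v\in V$), this is at least $-\log \pr{\bU_A + \bT \in V}$. I would then write $\pr{\bU_A + \bT \in V} = \frac{1}{\abs{A}}\sum_{a\in A}\pr{\bT \in V+a}$ and observe that $V+a$ depends only on which coset of $V$ the element $a$ lies in. Grouping the sum by cosets yields $\pr{\bU_A + \bT \in V} = \sum_{C}\frac{\abs{A\cap C}}{\abs{A}}\pr{\bT \in C}$, which as a convex combination is at most $\max_C\abs{A\cap C}/\abs{A} = \max_t\abs{A\cap(V+t)}/\abs{A}$. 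Combining gives $\tau^-\paren{\bU_V} \ge \log\abs{A} - \log\max_t\abs{A\cap(V+t)}$.

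For the upper bound, I would exhibit a single $\bT$ that achieves equality. Let $t^*$ be a maximizer of $\abs{A\cap(V+t)}$, set $C^* := V + t^*$, and take $\bT := \bU_{C^*}$. For every $v\in V$, the set $\{a\in A : v-a\in C^*\}$ is exactly $A\cap C^*$, since $v\in V$ forces $v-a\in C^*$ to be equivalent to $a\in C^*$. Hence
\[
\pr{\bU_A + \bU_{C^*} = v} \;=\; \frac{\abs{A\cap C^*}}{\abs{A}\cdot\abs{V}}\qquad\text{for all } v\in V,
\]
so $\bU_A + \bU_{C^*}$ is uniform on $V$ with this constant density. Plugging this back into the KL definition gives $\KL{\bU_V}{\bU_A + \bU_{C^*}} = \log\abs{A} - \log\abs{A\cap C^*}$, which matches the lower bound.

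The only mildly delicate point is that the lower-bound argument uses two Jensen-style inequalities---concavity of $\log$ in the KL expansion, and the convex-combination bound over cosets of $V$---each with its own tightness condition (respectively, that $\bU_A + \bT$ be uniform on $V$, and that $\bT$ be supported on a single maximizing coset). The choice $\bT = \bU_{C^*}$ meets both conditions simultaneously, so no limiting argument is needed and the identity holds on the nose.
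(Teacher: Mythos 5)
Your proof is correct and follows the same overall strategy as the paper: a lower bound on $\KL{\bU_V}{\bU_A+\bT}$ via convexity/Jensen reducing it to $-\log\Pr[\bU_A+\bT\in V]$, a coset-averaging bound showing this is at least $\log\abs{A}-\log\max_t\abs{A\cap(V+t)}$, and then a choice of $\bT$ achieving equality. Where you differ from the paper is in the final step, and your version is actually the more careful one. The paper asserts the infimum is attained at the \emph{deterministic} translate $\bT=t^*$, writing $\KL{\bU_V}{\bU_A+t^*}=\log\abs{A}-\log\abs{A\cap(V+t^*)}$; but $\KL{\bU_V}{\bU_A+t^*}$ is infinite whenever $V\not\subseteq A+t^*$, which is the generic situation, so as stated that equality is not right. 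The quantity $-\log\bigl(\abs{A\cap(V+t^*)}/\abs{A}\bigr)$ that the paper computes is really $\KL{\bU_V}{\bU_A+t^*+\bU_V}$, and $t^*+\bU_V$ is exactly your $\bU_{C^*}$. Your choice $\bT=\bU_{C^*}$ smooths $\bU_A+\bT$ out over the coset so that $\Supp(\bU_V)\subseteq\Supp(\bU_A+\bT)$, makes the two Jensen inequalities tight simultaneously as you note, and yields a finite, correct value on the nose. In short: same approach, but you patched a small imprecision in the paper's upper-bound computation.
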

\begin{proof}
First note that by the first property in \Cref{lemma:main-eq}, for any choice of $\bT$, $\KL{\bU_V}{\bU_A+\bT}\ge\KL{\bU_V}{\bU_A+\bT+\bU_V}$. Then observe that $$\KL{\bU_V}{\bU_A+\bT+\bU_V}=-\sum_{v\in V}\frac{1}{\abs{V}}\log\left(\abs{V}\Pr[\bU_A+\bT+\bU_V=v]\right)=-\log(\Pr[\bU_A+\bT\in V]).$$ Now define $t^*=\arg\max_{t}\abs{A\cap (V+t)}$. For any $\bT$, $$\KL{\bU_V}{\bU_A+\bT}\ge-\log(\Pr[\bU_A\in V+\bT])\ge -\log\left(\frac{\abs{A\cap(V+t^*)}}{\abs{A}}\right)=\KL{\bU_V}{\bU_A+t^*}.$$ Therefore the infimum of $\KL{\bU_V}{\bU_A+\bT}$ is $$\tau^-(\bU_V)=\KL{\bU_V}{\bU_A+t^*}=\log(|A|)-\log(\max_t\abs{A\cap (V+t)}).$$
\end{proof}
The lemma above directly implies that $\tau^+\paren{\bU_V}=\log(|V|)-\log(\max_t\abs{A\cap (V+t)})$. In addition, because both $\tau^+\paren{\bU_V},\tau^-\paren{\bU_V}$ are non-negative, $\max(\tau^+\paren{\bU_V},\tau^-\paren{\bU_V})\le \tau^+\paren{\bU_V}+ \tau^-\paren{\bU_V}$. Therefore we have the following claim. 
\begin{claim}\label{lemma:slice}
If $\tau^+(\bU_V)+\tau^-\paren{\bU_V}\le r$, then there exists $t$ such that $\abs{A\cap (V+t)}\ge 2^{-r}\max(\abs{A},\abs{V})$.
\end{claim}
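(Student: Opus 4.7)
The plan is to combine the identity for $\tau^-(\bU_V)$ established in the preceding lemma with the defining relation $\tau^+(\bU_V)=\tau^-(\bU_V)+\ent{\bU_V}-\ent{\bU_A}$. First I would observe that these two identities together yield closed forms
\[
\tau^-(\bU_V)=\log|A|-\log M, \qquad \tau^+(\bU_V)=\log|V|-\log M,
\]
where $M:=\max_t\abs{A\cap(V+t)}$, using $\ent{\bU_V}-\ent{\bU_A}=\log|V|-\log|A|$.

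Next I would verify that each of $\tau^+(\bU_V)$ and $\tau^-(\bU_V)$ is non-negative. For $\tau^-(\bU_V)$ this is immediate because it is defined as an infimum of KL divergences, which are always non-negative; alternatively, $M\le|A|$ since $A\cap(V+t)\subseteq A$. For $\tau^+(\bU_V)$, non-negativity follows from $M\le|V|$ since $A\cap(V+t)\subseteq V+t$, and $|V+t|=|V|$.

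Given both quantities are non-negative and sum to at most $r$, each individually is at most $r$. Plugging this into the closed forms gives $\log|A|-\log M\le r$ and $\log|V|-\log M\le r$, i.e., $M\ge|A|/2^r$ and $M\ge|V|/2^r$. Taking the maximum yields $M\ge 2^{-r}\max(|A|,|V|)$, which is exactly the claim, with $t$ chosen as the maximizer in the definition of $M$.

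There is no real obstacle here: the preceding lemma already does the substantive work of identifying $\tau^-(\bU_V)$ with a log-density, and the claim is essentially a bookkeeping consequence of the definitions together with non-negativity of KL divergence. The only point worth stating carefully is the non-negativity of $\tau^+$, which does not follow directly from the definition but rather from the combinatorial bound $M\le|V|$.
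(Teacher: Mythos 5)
Your proof is correct and follows the paper's argument exactly: compute the closed forms $\tau^-(\bU_V)=\log|A|-\log M$ and $\tau^+(\bU_V)=\log|V|-\log M$, note both are non-negative, and conclude each is at most $r$. The one thing you spell out that the paper leaves implicit — that non-negativity of $\tau^+(\bU_V)$ comes from $M\le|V|$ — is a worthwhile clarification but not a departure.
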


It remains prove that $\frac{1}{2}(\tau^++\tau^-)$ satisfies the conditions in \Cref{lemma:main}. The third and fourth conditions are easy to verify. For the first two conditions, it suffices to prove the following lemma.
\begin{lemma}\label{lemma:KL-eq}
For any independent random variables $\bX,\bY$ on $\bbF_2^n$ and $\bZ$ correlated with $\bX$,
\begin{itemize}
\item
$\tau^-(\bX+\bY)\le \tau^-(\bX)$,
\item 
$\tau^-(\bX\mid\bZ)\le \tau^-(\bX)+\ent{\bX}-\ent{\bX\mid\bZ}$,
\item 
$\tau^+(\bX+\bY)\le \tau^+(\bX)+\ent{\bX+\bY}-\ent{\bX}$,
\item 
$\tau^+(\bX\mid\bZ)\le \tau^+(\bX)$.
\end{itemize}
\end{lemma}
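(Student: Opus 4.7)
The plan is to derive all four inequalities directly from the two properties in \Cref{lemma:main-eq} together with the definitions of $\tau^\pm$. The bounds for $\tau^+$ will follow algebraically from the corresponding bounds for $\tau^-$ via the identity $\tau^+(\bX) - \tau^-(\bX) = \ent{\bX} - \ent{\bU_A}$, so the real work is to prove the first two bullets.

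For the first bullet ($\tau^-$ under sums), I plan to show that for every candidate $\bT$ appearing in the infimum defining $\tau^-(\bX)$, the random variable $\bT+\bY$ is a valid candidate in the infimum defining $\tau^-(\bX+\bY)$, and moreover
\[
\KL{\bX+\bY}{\bU_A+(\bT+\bY)}\le \KL{\bX}{\bU_A+\bT}.
\]
This is exactly the first property of \Cref{lemma:main-eq} applied to the three independent variables $\bX$, $\bU_A+\bT$, $\bY$, since $(\bU_A+\bT)+\bY=\bU_A+(\bT+\bY)$. Taking the infimum over $\bT$ on both sides yields the first bullet. The third bullet then follows from the identity
\[
\tau^+(\bX+\bY) - \tau^+(\bX) = \bigl(\tau^-(\bX+\bY) - \tau^-(\bX)\bigr) + \bigl(\ent{\bX+\bY} - \ent{\bX}\bigr) \le \ent{\bX+\bY} - \ent{\bX}.
\]

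For the second bullet ($\tau^-$ under conditioning), I fix any candidate $\bT$ independent of $(\bX,\bZ)$ and invoke the second property of \Cref{lemma:main-eq}:
\[
\KL{\bX\mid\bZ}{\bU_A+\bT} = \KL{\bX}{\bU_A+\bT} + \ent{\bX} - \ent{\bX\mid\bZ}.
\]
Since $\tau^-(\bX\mid\bZ)$ is an expectation over $z\sim\bZ$ of an infimum over candidates, using the same $\bT$ for every $z$ only upper-bounds this, giving $\tau^-(\bX\mid\bZ)\le \KL{\bX\mid\bZ}{\bU_A+\bT}$; taking the infimum over $\bT$ on the right delivers the bound. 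The fourth bullet then follows from the analogous identity $\tau^+(\bX\mid\bZ) - \tau^+(\bX) = \bigl(\tau^-(\bX\mid\bZ) - \tau^-(\bX)\bigr) + \bigl(\ent{\bX\mid\bZ} - \ent{\bX}\bigr)$: substituting the second-bullet bound cancels the entropy difference, leaving $\tau^+(\bX\mid\bZ) \le \tau^+(\bX)$.

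Since each step is a direct application of \Cref{lemma:main-eq}, I do not anticipate a substantive obstacle; the only subtlety is verifying that the substituted candidate $\bT+\bY$ remains a legitimate competitor (i.e.\ independent of the fresh copy of $\bU_A$), which holds because $\bY$ is independent of both $\bT$ and $\bU_A$ under the hypotheses of the lemma.
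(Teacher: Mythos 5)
Your proof is correct and follows essentially the same route as the paper: both establish the two $\tau^-$ bounds directly from \Cref{lemma:main-eq} (the first by substituting $\bT+\bY$ as a competitor and applying convexity, the second by pulling the infimum outside the expectation and using the cross-entropy identity), and both obtain the $\tau^+$ bounds as immediate algebraic consequences via $\tau^+(\cdot)=\tau^-(\cdot)+\ent{\cdot}-\ent{\bU_A}$. The paper simply states the last step as "direct corollaries" without spelling out the cancellation you wrote out.
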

\begin{proof}
For the first inequality, by \Cref{lemma:main-eq} we have
$$\tau^-(\bX+\bY)\le \inf_{\bT} \KL{\bX+\bY}{\bU_A+(\bT+\bY)} \le \inf_{\bT} \KL{\bX}{\bU_A+\bT} = \tau^-(\bX).$$
For the second inequality, again by \Cref{lemma:main-eq}, 
\begin{align*}
\tau^-(\bX\mid \bZ)
&= \ex[z\sim\bZ]{\inf_{\bT_z}\left(\KL{(\bX|_{\bZ=z})}{\bU_A+\bT_z}\right)}\\
&\le \inf_{\bT}\left(\ex[z\sim\bZ]{\KL{(\bX|_{\bZ=z})}{\bU_A+\bT}}\right)\\
&= \inf_{\bT}\left(\KL{(\bX\mid\bZ)}{\bU_A+\bT}\right)\\
&= \tau^-(\bX)+ \ent{\bX}-\ent{\bX\mid\bZ}.
\end{align*}
The remaining two conditions are direct corollaries of the first two.
\end{proof}
Now we are ready to prove \Cref{theorem:Marton}.
\begin{proof}[Proof of \Cref{theorem:Marton}]
Take $\tau_A=\tau_B=\frac{1}{2}(\tau^-+\tau^+)$ in \Cref{lemma:main} (which is legal by \Cref{lemma:KL-eq}). Then there exists a subspace $V$ such that $\tau^-(\bU_V)+\tau^+(\bU_V) \le \tau^-(\bU_A)+\tau^+(\bU_A) + 8\Ruz{\bU_A;\bU_A}\le 8\log(K)$, where the last inequality is by the fact that $\Ruz{\bU_A;\bU_A}\le \log(\abs{A+A})-\ent{\bU_A}\le \log(K)$. By \Cref{lemma:slice}, there exists $t$ such that $\abs{A\cap(V+t)}\ge \max(\abs{A},\abs{V})/K^8$. By \Cref{lemma:slice-to-cover}, $A$ can be covered by at most $2K^9$ translates of a subspace of size at most $\abs{A}$, which concludes the proof.
\end{proof}

\paragraph{Acknowledgements.} Thank Tim Gowers, Ben Green, Freddie Manners and Terry Tao for encouraging the author to write this paper and providing many helpful comments. We also want to thank the PFR Lean project team (\url{https://teorth.github.io/pfr/}) for formally verifying \Cref{theorem:Marton-entropic}.

\appendix
\section{Basic Information Measures}\label{appendix:definition}
In this section we briefly review some basic information measures and some of their properties that we use in this paper. Their proofs are either straightforward or can be found in textbooks of information theory, e.g. \cite{Cover}.

\paragraph{Shannon Entropy.} Let $\bX$ be any random variable on a finite set. The (Shannon) entropy of $\bX$ is defined as $\ent{\bX}:=\sum_{x\in \Supp(\bX)} -\pr{\bX=x}\log(\pr{\bX=x})$. Note that $\mathrm{H}$ only depends on the distribution of $\bX$, so we also treat it as a function on distributions. We will use the following simple properties.
\begin{itemize}
\item 
$\ent{\bX,\bY}\le \ent{\bX}+\ent{\bY}$, and $\ent{\bX,\bY}= \ent{\bX}+\ent{\bY}$ iff $\bX,\bY$ are independent.
\item
$\ent{\bX}\le\log(\Supp(\bX))$.
\item 
$\ent{\bX}=\ent{f(\bX)}$ for any $f$ that is injective on $\Supp(\bX)$.
\item
For distribution $P,Q$ over a finite set $S$ and any $\lambda\in[0,1]$, $\ent{\lambda P + (1-\lambda) Q}\ge \lambda \ent{P} + (1-\lambda) \ent{Q}$. In other words, $\mathrm{H}$ is concave.
\end{itemize}

\paragraph{Conditional Entropy.} For joint random variables $(\bX,\bZ)$ we define the conditional entropy $\ent{\bX\mid\bZ}$ to be $\ex[z\sim\bZ]{\ent{\bX|_{\bZ=z}}}$. We need the following properties.
\begin{itemize}
\item 
$\ent{\bX\mid\bZ}=\ent{\bX,\bZ}-\ent{\bZ}$
\item 
$\ent{\bX\mid\bZ}\le\ent{\bX}$
\item 
$\ent{f(\bX,\bZ)\mid g(\bZ)}=\ent{\bX\mid\bZ}$ for any $g$ injective on $\Supp(\bZ)$ and any $f(\cdot,z)$ that is injective on $\Supp(\bX)$ for every $z\in\Supp(\bZ)$. 
\end{itemize}

\paragraph{Mutual Information.} The mutual information between two correlated random variables $(\bX,\bY)$ is defined as $\info{\bX:\bY}:=\ent{\bX}+\ent{\bY}-\ent{\bX,\bY}$. For $(\bX,\bY,\bZ)$ we also define the conditional mutual entropy $\info{\bX:\bY\mid\bZ}:=\ex[z\sim\bZ]{\info{\bX|_{\bZ=z},\bY|_{\bZ=z}}}$. We have the following properties.
\begin{itemize}
\item 
$\info{\bX:\bY}\ge 0$, and $\info{\bX:\bY}=0$ iff $\bX,\bY$ are independent.
\item 
$\info{\bX:\bY}=\info{\bY:\bX}$
\item 
$\info{\bX:\bY}=\ent{\bX}-\ent{\bX\mid\bY}$. Similarly $\info{\bX:\bY\mid\bZ}=\ent{\bX\mid\bZ}-\ent{\bX\mid\bY,\bZ}$.
\item 
$\info{f(\bX,\bZ):\bY\mid g(\bZ)}=\info{\bX:\bY\mid\bZ}$ for any $g$ injective on $\Supp(\bZ)$ and any $f(\cdot,z)$ that is injective on $\Supp(\bX)$ for every $z\in\Supp(\bZ)$. 
\end{itemize}

\paragraph{Kullback-Leibler Divergence.} For distributions $P,Q$ over a finite $S$ s.t. $\Supp(P)\subseteq \Supp(Q)$, the Kullback-Leibler divergence is defined as $\KL{P}{Q}=\sum_{x\in \Supp(P)}P(x)\log(P(x)/Q(x))$. In addition, for $\Supp(P)\subsetneq \Supp(Q)$ define $\KL{P}{Q}=\infty$. We also abuse notation and define $\KL{\bX}{\bY}=\KL{P_\bX}{P_\bY}$ for two random variables $\bX,\bY$ where $P_{\bX},P_{\bY}$ denote their distributions. In addition, we write $\KL{\bX\mid\bZ}{\bY}=\ex[z\sim\bZ]{\KL{\bX|_{\bZ=z}}{\bY}}$. For Kullback-Leibler divergence we have the following properties. 
\begin{itemize}
\item 
$\KL{P}{Q}\ge 0$, and $\KL{P}{Q}=0$ iff $P=Q$.
\item 
For distributions $P_1,Q_1,P_2,Q_2$ over a finite set $S$ and any $\lambda\in[0,1]$, 
$$\KL{\lambda P_1+(1-\lambda)P_2}{\lambda Q_1 + (1-\lambda) Q_2} \le \lambda \KL{P_1}{Q_1} + (1-\lambda)\KL{P_2}{Q_2}.$$
In other words, $D_\mathrm{KL}$ is convex.
\item 
For any random variables $\bX,\bY$ on $S$ and any injection $f$, $\KL{f(\bX)}{f(\bY)}=\KL{\bX}{\bY}$.
\end{itemize}

\bibliography{AC}
\bibliographystyle{alpha}

\end{document}